\documentclass{siamltex}
\usepackage{amssymb}
\usepackage{lmodern}
\usepackage{bm}
\usepackage{graphicx}
\usepackage{graphics}
\usepackage{caption2}

\usepackage{psfrag}
\usepackage{amsmath}
\usepackage{lineno}
\usepackage{amscd}
\usepackage{amsfonts}
\usepackage{float}
\usepackage{latexsym}
\usepackage{lscape}
\usepackage{algorithm,algorithmic}
\usepackage{comment}
\usepackage{color}
\makeatletter
\newcommand*{\rom}[1]{\expandafter\@slowromancap\romannumeral #1@}
\makeatother

\usepackage[marginal]{footmisc}

\newtheorem{remark}{Remark}[section]

\newcommand{\ds}{\displaystyle}
\newcommand{\f}{\frac}

\newcommand{\om}{\Omega}
\newcommand{\p}{\partial}

\begin{document}
	%\linenumbers
\title{Addressing complex boundary conditions of miscible flow and transport in two and three dimensions with application to optimal control}

\author{
Yiqun Li\thanks{Department of Mathematics, University of South Carolina, Columbia, SC 29208, USA}
\and
Hong Wang\thanks{Department of Mathematics, University of South Carolina, Columbia, SC 29208, USA}
\and
Xiangcheng Zheng\thanks{School of Mathematics, Shandong University, Jinan 250100, China (Corresponding author, email: xzheng@sdu.edu.cn)}
}

	\maketitle
\begin{abstract}
We investigate complex boundary conditions of the miscible displacement system in two and three space dimensions with the commonly-used Bear-Scheidegger diffusion-dispersion tensor,  which describes, e.g., the porous medium flow processes in petroleum reservoir simulation or  groundwater contaminant transport.  Specifically, we incorporate the no-flux boundary condition for the Darcy velocity to prove that the general no-flux boundary condition for the transport equation is equivalent to the normal derivative boundary condition of the concentration,  based on which we further prove several complex boundary conditions by the Bear-Scheidegger tensor and its derivative. The derived boundary conditions not only provide new insights and distinct properties of the Bear-Scheidegger diffusion-dispersion tensor, but accommodate the coupling and the nonlinearity of the miscible displacement system and the Bear-Scheidegger tensor in deriving the first-order optimality condition of the corresponding optimal control problem for practical application.
	\end{abstract}

	\begin{keywords}
		  advection-diffusion,  diffusion-dispersion tensor,  miscible displacement system, boundary condition, optimal control
	\end{keywords}
	
	\begin{AMS}
	  35K20,  49J20, 49K20, 76S05
	\end{AMS}
	
	\pagestyle{myheadings}
	\thispagestyle{plain}
	\markboth{}{Complex boundary conditions of  miscible displacement system}
\section{Introduction}\label{S:intro}
\subsection{Problem formulation}
%Optimal control has demonstrated widespread applications in various fields and thus has been extensively studied in the literature \cite{Gon,Gun,Herzog,ItoKun,NeiTib}.
The incompressible miscible flow has attracted increasing attentions since it is shown to provide very competitive description of porous medium flow processes in petroleum reservoir simulation and  groundwater contaminant transport \cite{GanVon,MxCar,WanSINUM,WanSISC}.
Let $\Omega \subset \mathbb{R}^d (d \ge 2)$   be a $d$-dimensional bounded domain with the boundary $\p \Omega  $. Let $c(\bm x, t) $ represent the concentration of the injected fluid, which consists of surfactant or other chemicals that are mixed with water  to formulate a fully miscible fluid phase  \cite{Bal,Ewi84,Fen1} and let $p(\bm x, t)$ and $\bm v(\bm x, t)=(v_1,\ldots, v_d)$ be the pressure and Darcy velocity of the fluid mixture, respectively.

We consider the following incompressible miscible flow model has been widely used in describing  two-component,
incompressible, miscible displacement    \cite{Bear72}
\begin{equation}\label{VtFDEs}\begin{array}{cl}
\hspace{-0.15in}\ds \phi(\bm x)\p_t c   -  \nabla \cdot (\bm D(\bm x, \bm v) \nabla c) + \nabla \cdot (c \bm v)= \bar c q,~&\hspace{-0.075in} \mathrm{in} ~\Omega \times (0,T]; \\ [0.05in]
\ds \bm v = -K(\bm x) \nabla p, \quad  \ds \nabla \cdot \bm v = q, ~&\hspace{-0.075in} \mathrm{in} ~\Omega \times [0,T]; \\[0.05in]
	c(\bm x,0) = 0, \quad &\hspace{-0.075in} \mathrm{on}~\om; \\[0.05in]
	\bm v(\bm x, t) \cdot \bm n(\bm x) = (\bm D \nabla c)(\bm x,t) \cdot \bm n (\bm x) = 0,  &\hspace{-0.075in} \mathrm{on} ~ \p \om \times [0,T].
\end{array}\end{equation}
Here $\phi(\bm x)>0$ and $K(\bm x) >0$ represent the porosity and   permeability of the medium, respectively, and   $\nabla := (\p/\p x_1, \ldots, \p/\p x_d)^\top$ is the gradient operator.   $\bm D(\bm x, \bm v) = \big(D_{i,j}(\bm x, \bm v)\big)_{i,j=1}^d$ is the well-known Bear–Scheidegger diffusion-dispersion tensor defined by \cite{CheEwi,LiSunSINUM2015,SunWhe,WanSINUM,WanSISC}
\begin{equation}\label{D}
\ds \bm D(\bm x, \bm v) : = \phi(\bm x)d_m \bm I + d_t |\bm v| \bm I + (d_l-d_t) (v_i v_j)_{i,j=1}^d/|\bm v|
\end{equation}
 with $d_m$, $d_t$, $d_l>0$ being the molecular diffusion and the transverse and longitudinal dispersivities, respectively,
and $\bm I$ is the identity tensor. The vector $\bm n$ denotes the unit normal vector outward to $\p \Omega$, $q$  denotes the
external source/sink term, and   $\bar c (\bm x, t)$ is specified at sources and  $\bar c (\bm x, t) = c (\bm x, t)$ at sinks.

 For the displacement processes occurring in petroleum reservoir simulation, the Darcy's law in the second equation in \eqref{VtFDEs} typically depends on the viscosity of the  fluid mixture $\mu$, which further incorporates the gravitational effects  for the three-dimensional porous medium reservoir.
 % such that the second equation in \eqref{VtFDEs} is augmented as \cite{WanSISC,WanZhao}
%\begin{equation}\label{Darcy3d}
%\ds \bm v = -\f{K(\bm x)}{\mu(c)} \big(\nabla p - \rho g \nabla d\big),
%\end{equation}
% where $\rho$ is the density of the fluid mixture, $g$ is the magnitude of gravitational acceleration, and $d(\bm x)$  is the reservoir depth.
 Here we omit the gravitational and viscosity effects  and instead consider the model \eqref{VtFDEs} for  simplicity of presentation.
\subsection{Main contribution}
Miscible displacement system with Bear-Scheidegger tensor has been extensively studied in the literature with sophisticated mathematical or numerical analysis progresses \cite{Arb,Fen,LiSun,LiSunSINUM2015,RivWal,WanSINUM}. From the application point of view, there also exist some works on the optimal control of porous media flow in groundwater contaminant transport or in oil reservoir simulation \cite{ChaCao,KumRui}. % In optimal control of  immiscible displacement, the scalar diffusion coefficients depending only on the concentration are adopted in the governing equations (\ref{VtFDEs}).
 For more practical Bear-Scheidegger diffusion-dispersion tensor (\ref{D}), the corresponding optimal control remains untreated due to coupling and the nonlinearity of the system and the Bear-Scheidegger tensor. Specifically, deriving the first-order optimality condition requires several complex boundary conditions that are not encountered in the forward problem \eqref{VtFDEs} and even in the optimal control of the displacement system with scalar diffusion coefficients  \cite{ChaCao,KumRui}, and thus necessitate substantial investigations.

In this work, we address the aforementioned issue to prove novel boundary conditions of (\ref{VtFDEs}) for both two- and three-dimensional problems by employing properties of the Bear-Scheidegger tensor and its derivative, which provide new insights and distinct properties of the Bear-Scheidegger diffusion-dispersion tensor.  Specifically, we incorporate the no-flux boundary condition for the Darcy velocity to prove that the general no-flux boundary condition for the transport equation is equivalent to the normal derivative boundary condition of the concentration, based on which we further prove several complex boundary conditions by the Bear-Scheidegger tensor and its derivative.  We then apply the proved boundary conditions  to derive the first-order optimality condition of the optimal control problem by carefully designing the adjoint system, which demonstrates the applicability of the proposed boundary conditions.
Specifically, we prove complex boundary conditions for the two-dimensional case in \S \ref{Sect:Lem}, and then apply them to derive the first-order optimality condition of the optimal control of the miscible displacement system (\ref{Model}) in \S \ref{Sect:Cond}. We finally prove analogous results for the three-dimensional case by more technical treatments in \S \ref{Sect:Lem3d}.

\subsection{Preliminaries and assumptions}\label{notation}

  Let $L^q(\om)$ with $1 \le q \le \infty$ be the Banach space of $q$th power Lebesgue integrable functions on $\om$.
   For a positive integer $m$, let  $ W^{m, q}(\Omega)$ be the Sobolev space of $L^q$ functions with $m$th weakly derivatives in $L^q(\om)$.
Let $H^m(\om) = W^{m, 2}(\om)$ and $H_0^m(\Omega)$ be the completion of $C_0^\infty(\Omega)$, the space of infinitely differentiable functions with compact support in $\Omega$, in $H^m(\Omega)$ \cite{AdaFou,Eva}.
For a Banach space $\mathcal{X}$, let $W^{m, q}(0,T; \mathcal{X})$ be the space of functions in $W^{m, q}(0,T)$ with respect to $\|\cdot\|_{\mathcal {X}}$. All spaces are equipped with standard norms \cite{AdaFou,Eva}.
% We may also drop the notation $\Omega$ in the inner product and Sobolev spaces and norms, and write $W^{m,q}(X)$ for $W^{m,q}(0, T; X)$ when no confusion occurs, e.g. we write $L^2(L^2)$ instead of $L^2(0,T;L^2(\Omega))$ for simplicity.
 % In subsequent sections, we use $Q$ and $M$ to denote positive constants where $Q$ is independent from the quantities of interest but may assume different values at different occurrences.

Recall that $q$ denotes the external source/sink term and  we impose $\int_\Omega q(\bm x, t) d \bm x =0$ for a.e. $t \in [0, T]$, which states that for an incompressible flow with an impermeable boundary, the amount of injected fluid should be equal to the amount that is produced.
%Similarly, we impose $\int_\Omega (p - p_d)(\bm x, t) d \bm x =0$ for  a.e. $t \in [0, T]$ for the elliptic equation with respect to $\psi_p$ with the no-flow boundary condition in  the adjoint equations  \eqref{AdjEq}.
Furthermore, the forward problem \eqref{VtFDEs}  with  no-flow boundary conditions can only determine the pressure $p$ up to an additive constant. To  ensure the  uniqueness of  $p$, we  enforce   $\int_\Omega p(\bm x, t) d \bm x =0$ for  a.e. $t \in [0, T]$.
Based on these discussions, we follow \cite{LiSunSINUM2013,WanSi} to make the following {\it assumptions}:
\begin{itemize}
\item[(a)]  $\p_{v_i}  D_{i, j}$, $\p_{v_j}  D_{i, j} \in L^\infty(L^\infty)$ for $ 1 \le i, j \le d$.
\item[(b)]   $ \phi >\phi_* $ for some $\phi_*>0$.
\item[(c)] $\int_{\Omega} q\, d\bm x = \int_{\Omega} p\, d\bm x =0$ for a.e. $t \in [0, T]$.
\end{itemize}

\section{Complex boundary conditions in 2D}\label{Sect:Lem}
We prove complex boundary conditions for the two-dimensional case.
\begin{lemma}\label{Lem:D} For $\bm D(\bm x, \bm v)$ defined in \eqref{D} with $d =2$, $\bm v(\bm x, t) \cdot \bm n(\bm x)=0$ on $ \p \om \times [0,T]$ and some  scalar function $g$, we have
\begin{equation}\begin{array}{l}\label{LemD:eq1}
  \ds (\bm D \nabla g) \cdot \bm n(\bm x) =0 \Longleftrightarrow   \nabla g \cdot \bm n(\bm x) =0 ~~ \mbox{on}~~  \p \om \times [0,T].
  \end{array}
\end{equation}
\end{lemma}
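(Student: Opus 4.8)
The plan is to exploit the rank-one anisotropic structure of the Bear--Scheidegger tensor together with the no-flux hypothesis $\bm v \cdot \bm n = 0$. First I would rewrite \eqref{D} in the form
\[
\bm D(\bm x, \bm v) = \alpha\, \bm I + \beta\, \bm v \bm v^\top, \qquad \alpha := \phi(\bm x) d_m + d_t |\bm v|,\quad \beta := \frac{d_l - d_t}{|\bm v|},
\]
where $\bm v \bm v^\top = (v_i v_j)_{i,j=1}^2$ is the outer product. The crucial observation is that the anisotropic part $\beta\,\bm v \bm v^\top$ is a scalar multiple of the projection onto the direction of $\bm v$, so its action on any vector is parallel to $\bm v$, and therefore its normal component on $\p\om$ is controlled entirely by $\bm v \cdot \bm n$.

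Then I would compute the matrix--vector product directly. Since $(\bm v \bm v^\top)\nabla g = (\bm v \cdot \nabla g)\,\bm v$, one obtains
\[
\bm D \nabla g = \alpha\, \nabla g + \beta\,(\bm v \cdot \nabla g)\, \bm v,
\]
and taking the normal component on $\p\om$ gives
\[
(\bm D \nabla g)\cdot \bm n = \alpha\,(\nabla g \cdot \bm n) + \beta\,(\bm v \cdot \nabla g)\,(\bm v \cdot \bm n).
\]
Invoking $\bm v \cdot \bm n = 0$ on $\p\om \times [0,T]$ annihilates the second term, leaving the clean boundary identity $(\bm D \nabla g)\cdot\bm n = \alpha\,(\nabla g\cdot\bm n)$. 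The equivalence \eqref{LemD:eq1} is then immediate once $\alpha$ is strictly positive: by assumption (b) we have $\phi > \phi_* > 0$, and with $d_m, d_t > 0$ and $|\bm v|\ge 0$ this yields $\alpha \ge \phi_* d_m > 0$, so the scalar factor $\alpha$ can be cancelled in both directions.

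The only point requiring care, and essentially the sole obstacle in an otherwise algebraically transparent argument, is the degeneracy of $\beta = (d_l-d_t)/|\bm v|$ where $\bm v$ vanishes. There I would note that the offending term satisfies $|v_i v_j|/|\bm v| \le |\bm v| \to 0$, so $\bm D$ extends continuously to $\phi d_m \bm I$ at points with $\bm v = 0$, where the equivalence is trivial and the decomposition above is read in the limiting sense. I would also emphasize that nothing in this argument actually uses $d = 2$: the alignment of $\bm v \bm v^\top$ with $\bm v$ holds in every dimension, and the dimensional restriction here merely matches the organization of \S\ref{Sect:Lem}; the genuinely dimension-sensitive difficulties appear only in the later boundary conditions that involve derivatives of $\bm D$.
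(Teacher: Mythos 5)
Your proof is correct and is essentially the paper's own argument: the paper performs the identical computation in components, writing $\bm n=(\cos\theta,\sin\theta)$ and factoring $v_1\cos\theta+v_2\sin\theta=0$ out of the anisotropic term, which is exactly your coordinate-free identity $(\bm v\bm v^\top\nabla g)\cdot\bm n=(\bm v\cdot\nabla g)(\bm v\cdot\bm n)=0$, followed by the same positivity argument for the remaining scalar factor. Your two side remarks are both sound refinements the paper passes over: the degenerate set $\{\bm v=0\}$ is not discussed there, and the dimension-independence you note is confirmed by the fact that Lemma \ref{Lem3d:D} reproves the same statement for $d=3$ by the same componentwise computation.
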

\begin{proof}
By \eqref{D}, we have
\begin{equation}\begin{array}{cl}\label{LemD:Dg}
\hspace{-0.15in}\ds (\bm D \nabla g) \cdot \bm n(\bm x) = \big(\phi  + d_t |\bm v|\big) \nabla g \cdot \bm n(\bm x) +  \f{(d_l-d_t)}{|\bm v| } \big((v_i v_j)_{i,j=1}^2 \nabla g \big)\cdot \bm n(\bm x),
\end{array}
\end{equation}
where, by  $\bm n(\bm x) : = (\cos\theta, \sin\theta)$ for some  $\theta$ depending on $\bm x \in \p \om$, the last term could be further evaluated on $\p\om$ as
\begin{equation}\begin{array}{cl}\label{LemD:Dg:e1}
\hspace{-0.15in}\ds \big((v_i v_j)_{i,j=1}^2 \nabla g \big)\cdot \bm n(\bm x) & \ds \hspace{-0.1in}=
\begin{bmatrix}
  v_1^2 \p_x g + v_1 v_2 \p_y g  \\[0.1in]
  v_1 v_2 \p_x g + v_2^2 \p_y g
\end{bmatrix} \cdot \bm n(\bm x)\\[0.2in]
 & \ds \hspace{-0.8in}=\p_x g\big(v_1^2 \cos\theta + v_1 v_2 \sin\theta\big)  + \p_y g\big(v_1 v_2 \cos\theta + v_2^2 \sin\theta\big)=0.
\end{array}
\end{equation}
Here we have used the fact that
\begin{equation}\label{bd:v}
\ds v_1 \cos\theta + v_2 \sin\theta=0
\end{equation}
 derived from $\bm v \cdot \bm n(\bm x)=0$ on $ \p \om \times [0,T]$. Thus, we obtain from \eqref{LemD:Dg}--\eqref{LemD:Dg:e1} that $(\bm D \nabla g) \cdot \bm n(\bm x) = \big(\phi  + d_t |\bm v|\big) \nabla g \cdot \bm n(\bm x) $ on $\p \om \times [0,T]$, which, together with $\phi  + d_t |\bm v|>\phi_*>0$,  completes the proof.
\end{proof}

\begin{lemma}\label{Lem1} Under the boundary conditions
\begin{equation}\label{Lem1:bd}
 \bm v(\bm x, t) \cdot \bm n(\bm x)= \nabla p(\bm x, t) \cdot \bm n(\bm x)= (\bm D \nabla c) \cdot \bm n(\bm x) =0 ~~ \mbox{on}~~  \p \om \times [0,T],
\end{equation}
we have $\ds \big[ (\bm E (\bm x, \bm v) \otimes \nabla p) \nabla c\big] \cdot \bm n(\bm x) =0$  \mbox{on} $\p \om \times [0,T]$, where $\bm E =\big(\p_{\bm v}D_{i,j}\big)_{i,j=1}^2$  is a 2-by-2 block matrix with entries given by
\begin{equation}\begin{array}{l}\label{DhatE}
\hspace{-0.15in}\ds (\p_{v_1}D_{1,1},\p_{v_2}D_{1,1}) \ds =\p_{\bm v} D_{1,1}   = \p_{\bm v}\Big( dt |\bm v| +  (d_l-d_t){v_1^2}/{|\bm v|}\Big)\\[0.1in]
 \ds \qquad \qquad \qquad \qquad \qquad \quad  =   dt \Big(\f{v_1}{|\bm v|}, \f{v_2}{|\bm v|}\Big) \!+\! (d_l-d_t)  \Big(\f{v_1(v_1^2 + 2 v_2^2)}{|\bm v|^3}, \f{-v_1^2 v_2}{|\bm v|^3}\Big), \\[0.15in]
%& \hspace{-0.15in} \ds = \Big( \f{d_lv_1^3 +(2d_l -d_t) v_1v_2^2}{|\bm v|^3}, \f{d_t v_2^3 + (2d_t-d_l) v_1^2v_2}{|\bm v|^3}\Big) ,\\[0.15in]
\hspace{-0.15in} \ds \ds (\p_{v_1}D_{1,2},\p_{v_2}D_{1,2}) = \p_{\bm v}  D_{1,2}  \ds  = (d_l-d_t) \p_{\bm v}\Big(  {v_1 v_2}/{|\bm v|}\Big) = (d_l-d_t)  \Big(\f{v_2^3}{|\bm v|^3}, \f{v_1^3}{|\bm v|^3}\Big),\\[0.1in]
\hspace{-0.15in}  \ds (\p_{v_1}D_{2,1},\p_{v_2}D_{2,1})\ds  =  (\p_{v_1}D_{1,2},\p_{v_2}D_{1,2}),\\[0.1in]
\hspace{-0.15in} \ds \ds (\p_{v_1}D_{2,2},\p_{v_2}D_{2,2}) \ds  = \p_{\bm v}  D_{2,2}   = \p_{\bm v}\Big( dt |\bm v| +  (d_l-d_t){v_2^2}/{|\bm v|}\Big)\\[0.1in]
 \ds \qquad \qquad \qquad \qquad \qquad \quad  =   dt \Big(\f{v_1}{|\bm v|}, \f{v_2}{|\bm v|}\Big)\! +\! (d_l-d_t)  \Big(\f{-v_1 v_2^2 }{|\bm v|^3}, \f{v_2(2 v_1^2 +  v_2^2)}{|\bm v|^3}\Big)%\\[0.15in]
%& \hspace{-0.15in}\ds = \Big(\f{d_t v_1^3 + (2d_t-d_l) v_1v_2^2}{|\bm v|^3}, \f{d_lv_2^3 +(2d_l -d_t) v_1^2v_2}{|\bm v|^3}\Big).
\end{array}
\end{equation}
and $\otimes$ is interpreted as a Kronecker type product of a 2-by-2 block matrix and a column vector in $\mathbb R^{2\times 1}$ such that
\begin{equation}\begin{array}{l}\label{Def:kron}
  \ds {\bm E} (\bm x, \bm v)\otimes  \nabla c :=
  \begin{bmatrix}
\big( \p_{ v_1}  D_{1,1}, \p_{ v_2}  D_{1,1}\big)\nabla c   &\big( \p_{ v_1}  D_{1,2}, \p_{v_2}  D_{1,2}\big)\nabla c    \\[0.1in]
\big( \p_{v_1}  D_{2,1}, \p_{ v_2}  D_{2,1}\big) \nabla c    & \big( \p_{ v_1}  D_{2,2}, \p_{ v_2}  D_{2,2}\big)\nabla c
\end{bmatrix}\in \mathbb R^{2\times 2}.
\end{array}
\end{equation}
 \end{lemma}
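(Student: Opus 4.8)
The plan is to show that, \emph{on the boundary}, the block matrix $\bm E\otimes\nabla p$ collapses into a scalar multiple of the direction-normalized Bear--Scheidegger tensor, after which the conclusion follows exactly as in Lemma~\ref{Lem:D}. Writing $\bm w:=(\bm E\otimes\nabla p)\nabla c$, its $i$th component is $\sum_{j=1}^{2}\big(\p_{\bm v}D_{i,j}\cdot\nabla p\big)\,\p_j c$, so the goal is $\bm w\cdot\bm n=0$ on $\p\om\times[0,T]$.

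The reduction rests on two ingredients. First, by Darcy's law $\bm v=-K(\bm x)\nabla p$ (the second equation of \eqref{VtFDEs}) with $K>0$ scalar, $\nabla p$ is parallel to $\bm v$ throughout $\om$; in particular $\nabla p=\lambda\,\hat{\bm v}$ with $\lambda=-|\bm v|/K$, where $\hat{\bm v}:=\bm v/|\bm v|$ (the degenerate case $\bm v=0\Leftrightarrow\nabla p=0$ gives $\bm w\equiv 0$, so I assume $\bm v\neq 0$). Equivalently, on $\p\om$ the conditions $\bm v\cdot\bm n=\nabla p\cdot\bm n=0$ place both vectors in the one-dimensional tangent line spanned by $(-\sin\theta,\cos\theta)$, again forcing $\nabla p\parallel\bm v$. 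Second, I would exploit homogeneity: the dispersion part $\tilde D_{i,j}:=d_t|\bm v|\delta_{ij}+(d_l-d_t)v_iv_j/|\bm v|$ of $D_{i,j}$ is positively homogeneous of degree one in $\bm v$, and $\p_{\bm v}D_{i,j}=\p_{\bm v}\tilde D_{i,j}$ since $\phi d_m\delta_{ij}$ is $\bm v$-independent. Euler's identity $\bm v\cdot\p_{\bm v}\tilde D_{i,j}=\tilde D_{i,j}$ then yields $\hat{\bm v}\cdot\p_{\bm v}D_{i,j}=\tilde D_{i,j}/|\bm v|=d_t\delta_{ij}+(d_l-d_t)\hat v_i\hat v_j$, which one can also check directly against the explicit entries in \eqref{DhatE}.

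Combining the two, on $\p\om$ the $(i,j)$ entry of $\bm E\otimes\nabla p$ is $\p_{\bm v}D_{i,j}\cdot\nabla p=\lambda\,\hat{\bm v}\cdot\p_{\bm v}D_{i,j}=\lambda\big(d_t\delta_{ij}+(d_l-d_t)\hat v_i\hat v_j\big)$, i.e.
\[
\bm E\otimes\nabla p=\lambda\big(d_t\bm I+(d_l-d_t)\hat{\bm v}\hat{\bm v}^\top\big),
\]
so that $(\bm E\otimes\nabla p)\nabla c=\lambda\big(d_t\nabla c+(d_l-d_t)(\hat{\bm v}\cdot\nabla c)\hat{\bm v}\big)$ and
\[
\big[(\bm E\otimes\nabla p)\nabla c\big]\cdot\bm n=\lambda\big(d_t\,\nabla c\cdot\bm n+(d_l-d_t)(\hat{\bm v}\cdot\nabla c)(\hat{\bm v}\cdot\bm n)\big).
\]
Both terms vanish because $\nabla c\cdot\bm n=0$ (Lemma~\ref{Lem:D} applied to $(\bm D\nabla c)\cdot\bm n=0$) and $\hat{\bm v}\cdot\bm n=0$ (from $\bm v\cdot\bm n=0$), which proves the claim.

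The main obstacle is recognizing and justifying the collapse in the third paragraph: a priori $\bm E\otimes\nabla p$ is an awkward object assembled from the four direction-dependent gradient vectors in \eqref{DhatE}, and it is not obvious that contracting against $\nabla p$ restores the clean $d_t\bm I+(d_l-d_t)\hat{\bm v}\hat{\bm v}^\top$ form. The key is that this restoration hinges on $\nabla p\parallel\bm v$, which makes the contraction precisely the Euler (radial) derivative; without parallelism one would instead have to grind through the entries of \eqref{DhatE} under only the tangency constraints, the brute-force route I would avoid.
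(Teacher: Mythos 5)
Your proposal is correct, and it takes a genuinely different route from the paper. The paper splits $\bm E=\bm E^t+\bm E^l$ into its transverse and longitudinal parts, disposes of $\bm E^t$ via $\nabla c\cdot\bm n=0$, and then verifies $\big[(\bm E^l\otimes\nabla p)\nabla c\big]\cdot\bm n=0$ by an entry-by-entry computation: it substitutes $v_1\cos\theta=-v_2\sin\theta$ into the coefficients $Q_1,Q_2$ of $\p_x c,\p_y c$ and reduces each to $|\bm v|^2v_i\,(\nabla p\cdot\bm n)=0$. You instead observe that $\nabla p$ is parallel to $\bm v$ on $\p\om$, so the contraction $\p_{\bm v}D_{i,j}\cdot\nabla p$ is a radial derivative, and Euler's identity for the degree-one homogeneous dispersion part collapses $\bm E\otimes\nabla p$ to $\lambda\big(d_t\bm I+(d_l-d_t)\hat{\bm v}\hat{\bm v}^\top\big)$, after which the conclusion is immediate from $\nabla c\cdot\bm n=\hat{\bm v}\cdot\bm n=0$. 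Your argument is shorter and explains structurally why the paper's cancellations occur; I verified the Euler identity against the explicit entries of \eqref{DhatE} and it checks out. One point deserves care: the paper later applies this lemma with $\delta_\varepsilon p$ in place of $p$ (in \eqref{Int:E}), and $\nabla\delta_\varepsilon p$ is \emph{not} related to $\bm v$ by Darcy's law, so your first justification of parallelism would not transfer; fortunately your second justification --- that in two dimensions the tangent line orthogonal to $\bm n$ is one-dimensional, so any two vectors annihilated by $\bm n$ are parallel on $\p\om$ --- uses only the stated boundary conditions and covers that application. That same step is also where the two-dimensionality enters essentially: in 3D the tangent plane is two-dimensional, parallelism fails, and one is pushed back toward the componentwise computations of \S\ref{Sect:Lem3d}, which is what the paper's brute-force approach buys by comparison.
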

\begin{proof}
We split $\p_{\bm v} D_{1,1}$ and $\p_{\bm v} D_{2,2}$ in \eqref{DhatE} as
\begin{equation}\begin{array}{cl}\label{Lem:hatE}
\hspace{-0.15in}\ds \p_{\bm v} D_{1,1} &\hspace{-0.1in}\ds   =   dt \Big(\f{v_1}{|\bm v|}, \f{v_2}{|\bm v|}\Big) + (d_l-d_t)  \Big(\f{v_1(v_1^2 + 2 v_2^2)}{|\bm v|^3}, \f{-v_1^2 v_2}{|\bm v|^3}\Big) = : \bm E_d^t + \bm E_{1,1}^l,\\[0.15in]
%\hspace{-0.15in} \ds \p_{\bm v} \bm D_{1,2}  &\hspace{-0.1in}\ds  = (d_l-d_t)  \Big(\f{v_2^3}{|\bm v|^3}, \f{v_1^3}{|\bm v|^3}\Big),\quad \p_{\bm v} \bm D_{2,1}=  \p_{\bm v} \bm D_{1,2},\\[0.1in]
\hspace{-0.15in} \ds \p_{\bm v}  D_{2,2} &\hspace{-0.1in}\ds  =   dt \Big(\f{v_1}{|\bm v|}, \f{v_2}{|\bm v|}\Big) + (d_l-d_t)  \Big(\f{-v_1 v_2^2 }{|\bm v|^3}, \f{v_2(2 v_1^2 +  v_2^2)}{|\bm v|^3}\Big) = : \bm E_d^t + \bm E_{2,2}^l,
\end{array}
\end{equation}
and thus to split $\bm E =: \bm E^t + \bm E^l  $ as the sum of two 2-by-2  block  matrices
\begin{equation}\begin{array}{cl}\label{Lem:Matrix}
\ds \bm E^t: = \diag(\bm E^t_d, \bm E^t_d), \quad \bm E^l = \big(\bm E_{i,j}^l\big)_{i,j=1}^2: =
\begin{bmatrix}
  \bm E_{1,1}^l & \p_{\bm v}  D_{1,2}  \\[0.1in]
  \p_{\bm v}  D_{2,1} & \bm E_{2,2}^l
\end{bmatrix}
\end{array}.
\end{equation}
We first show that $\big[ (\bm E^t \otimes \nabla p) \nabla c\big ]\cdot \bm n(\bm x) =0$ on $\p \om \times [0,T]$. By Lemma \ref{Lem:D}, $(\bm D \nabla c) \cdot \bm n(\bm x) =0 $ on $ \p \om \times [0,T]$ implies that $\nabla c \cdot \bm n(\bm x) =0$ on $ \p \om \times [0,T]$. By \eqref{Def:kron} and \eqref{Lem:hatE}--\eqref{Lem:Matrix}, we have
\begin{equation}\begin{array}{l}\label{Lem:Et}
\hspace{-0.175in}\ds  \big[ (\bm E^t \otimes  \nabla p) \nabla c\big ]\cdot \bm n(\bm x)\ds =
\begin{bmatrix}
\bm E^t_d  \nabla p  &0\\
0 & \bm E^t_d  \nabla p
\end{bmatrix}
\nabla c \cdot \bm n(\bm x)
%\begin{bmatrix}
%\big(E^t \cdot \nabla p\big)  &0\\
%0 & \big(E^t \cdot \nabla p\big)
%\end{bmatrix}
%\begin{bmatrix}
%\p_x c  \\
%\p_y c
%\end{bmatrix} \cdot \bm n(\bm x) \\[0.175in]
\ds = \big(\bm E^t_d   \nabla p\big) \nabla c \cdot \bm n(\bm x) = 0
\end{array}
\end{equation}
on $ \p \om \times [0,T]$.
By \eqref{Lem:Matrix}, we then combine \eqref{Def:kron} to evaluate $\big[(\bm E^l \otimes\nabla p) \nabla c\big] \cdot \bm n(\bm x) $ on $ \p \om \times [0,T]$ as follows
\begin{equation}\begin{array}{cl}\label{Lem:El}
\ds  \big[(\bm E^l \otimes \nabla p) \nabla c\big] \cdot \bm n(\bm x)  & \hspace{-0.1in}\ds =
\begin{bmatrix}
\bm E^l_{1,1}  \nabla p  &\p_{\bm v}  D_{1,2} \nabla p  \\[0.1in]
\p_{\bm v} D_{2,1} \nabla p  & \bm E^l_{2,2}  \nabla p
\end{bmatrix}
\nabla c \cdot \bm n(\bm x)\\[0.25in]
& \hspace{-0.1in}\ds
 =
 \begin{bmatrix}
\big(\bm E^l_{1,1}  \nabla p\big)\p_x c  + \big(\p_{\bm v}  D_{1,2}  \nabla p\big)\p_y c  \\[0.1in]
\big(\p_{\bm v}  D_{2,1}   \nabla p\big)\p_x c  + \big( \bm E^l_{2,2}  \nabla p\big)\p_y c
\end{bmatrix} \cdot \bm n(\bm x)\\[0.25in]
& \hspace{-0.1in}\ds = \p_x c \Big[ \cos\theta\big(\bm E^l_{1,1} \nabla p\big) + \sin\theta  \big(\p_{\bm v}  D_{2,1}   \nabla p\big)\Big] \\[0.1in]
& \hspace{-0.1in}\ds \qquad + \p_y c \Big[ \cos\theta\big(\p_{\bm v}  D_{1,2}  \nabla p\big) + \sin\theta  \big(\bm E^l_{2,2}  \nabla p\big)\Big] \\[0.1in]
& \hspace{-0.1in}\ds = : \f{d_l-d_t}{|\bm v|^3}\big( Q_1 \p_x c  + Q_2 \p_y c\big).
\end{array}
\end{equation}
Here $Q_1$ could be computed by incorporating \eqref{bd:v}, \eqref{DhatE}, and \eqref{Lem:hatE}  as follows
\begin{equation}\begin{array}{cl}\label{Lem:Q1}
\hspace{-0.15in}\ds Q_1 %&\hspace{-0.125in} \ds =  \cos\theta\big(\bm E^l_{1,1} \cdot \nabla p\big) + \sin\theta  \big(\p_{\bm v} \bm D_{2,1}  \cdot \nabla p\big) \\[0.1in]
& \hspace{-0.125in} \ds = (\cos\theta v_1) \big[ (v_1^2 + 2v_2^2) \p_x p -v_1 v_2 \p_y p \big]  + \sin\theta\big[v_2^3\p_x p + v_1^3 \p_y p \big] \\[0.1in]
\hspace{-0.15in}& \hspace{-0.125in} \ds = (-\sin\theta v_2) \big[ (v_1^2 + 2v_2^2) \p_x p -v_1 v_2 \p_y p \big]  + \sin\theta\big[v_2^3\p_x p + v_1^3 \p_y p \big] \\[0.1in]
\hspace{-0.15in}& \hspace{-0.125in} \ds = \sin\theta\big[ - v_2 |\bm v|^2 \p_x p + v_1|\bm v|^2 \p_y p\big]  =  (\cos\theta v_1)|\bm v|^2 \p_x p +\sin\theta v_1|\bm v|^2 \p_y p\\[0.1in]
\hspace{-0.15in}& \hspace{-0.125in} = |\bm v|^2 v_1 \big[ \cos\theta \p_x p +  \p_y p \sin\theta\big] =0,
\end{array}
\end{equation}
where the last equality employs $\nabla p \cdot \bm n(\bm x) =0$ on $ \p \om \times [0,T]$.
Similarly, we evaluate $Q_2$ in \eqref{Lem:El} as follows
\begin{equation}\begin{array}{cl}\label{Lem:Q2}
\hspace{-0.15in} \ds Q_2 %&\hspace{-0.1in} \ds =  \cos\theta\big(\p_{\bm v} \bm D_{2,1}\cdot \nabla p\big) + \sin\theta  \big(\bm E^l_{2,2}  \cdot \nabla p\big) \\[0.1in]
& \hspace{-0.1in} \ds = \cos\theta  \big[v_2^3\p_x p + v_1^3 \p_y p \big]   + v_2\sin\theta\big[-v_1v_2  \p_x p  + (2v_1^2 + v_2^2) \p_y p \big] \\[0.1in]
& \hspace{-0.1in} \ds = \cos\theta  \big[v_2^3\p_x p + v_1^3 \p_y p \big] - \cos\theta v_1 \big[ -v_1v_2  \p_x p  + (2v_1^2 + v_2^2) \p_y p  \big] \\[0.1in]
& \hspace{-0.1in} \ds = \cos\theta  \big[v_2 |\bm v|^2 \p_x p - v_1|\bm v|^2  \p_y p\big] = |\bm v|^2 v_2 \big[ \p_x p \cos\theta+  \p_y p\sin\theta\big]=0.
\end{array}
\end{equation}

We incorporate \eqref{Lem:Q1}--\eqref{Lem:Q2} to find that $\big[(\bm E^l \otimes \nabla p) \nabla c\big] \cdot \bm n(\bm x) =0$  on $ \p \om \times [0,T]$, which, together with \eqref{Lem:Matrix} and \eqref{Lem:Et}, yields the conclusion of this lemma.
\end{proof}

For the convenience of analysis, we define a 2-by-2 block matrix $\hat{\bm E }$   expressed as follows
\begin{equation}\begin{array}{cl}\label{hatE}
  \ds \hat{\bm E} = \big(\hat{\bm E}_{i,j}\big)_{i,j=1}^2 &  \hspace{-0.1in} \ds = :
   \begin{bmatrix}
\big( \p_{ v_1}  D_{1,1}, \p_{ v_1}  D_{1,2}\big)  &\big( \p_{ v_1}  D_{2,1}, \p_{v_1}  D_{2,2}\big)   \\[0.1in]
\big( \p_{v_2}  D_{1,1}, \p_{ v_2}  D_{1,2}\big)    & \big( \p_{ v_2}  D_{2,1}, \p_{ v_2}  D_{2,2}\big)
\end{bmatrix}.
%\\[0.25in]
%& \hspace{-0.1in} \; \ds \ds =\Big[\p_{\bm v} \bm D_{1,1}^\top,\p_{\bm v} \bm D_{1,2}^\top,\p_{\bm v} \bm D_{2,1}^\top,\p_{\bm v} \bm D_{2,2}^\top\Big]
\end{array}
\end{equation}

\begin{lemma}\label{Lem2} For some scalar function $\psi_c$,  $\bm E =\big(\p_{\bm v}D_{i,j}\big)_{i,j=1}^2$ and $\hat {\bm E}$ defined in \eqref{hatE} satisfy
\begin{equation}\begin{array}{c}\label{Lem2:eq1}
  \ds  \big[K(\bm x) (\bm E (\bm x, \bm v) \otimes  \nabla p )\nabla c\big] \cdot \nabla \psi_c = \nabla  p \cdot \big[K(\bm x) (\hat{\bm E} (\bm x, \bm v)\otimes  \nabla c) \nabla \psi_c \big]
  \end{array}
\end{equation}
on $\Omega \times [0,T]$.
\end{lemma}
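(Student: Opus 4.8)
My plan is to treat \eqref{Lem2:eq1} as a \emph{pointwise} algebraic identity on $\Omega\times[0,T]$: note that, unlike Lemmas \ref{Lem:D} and \ref{Lem1}, no boundary condition is invoked and no geometric information about $\bm n$ enters, so the equality must come entirely from the tensor structure of $\bm E$ and $\hat{\bm E}$. Accordingly I would not attempt any intrinsic ``coordinate-free'' manipulation but instead expand both sides in Cartesian components, writing $\nabla p=(\partial_{x_1}p,\partial_{x_2}p)^\top$, $\nabla c=(\partial_{x_1}c,\partial_{x_2}c)^\top$, $\nabla\psi_c=(\partial_{x_1}\psi_c,\partial_{x_2}\psi_c)^\top$, and reducing each side to a single triple sum over the indices $i,j,k\in\{1,2\}$. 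Since $K$ and every derivative factor are scalars, all quantities commute and the two sides are just two orderings of products of the same four scalar factors.

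For the left-hand side I would use \eqref{Def:kron} to record that the $(i,j)$ entry of $\bm E\otimes\nabla p$ is $\sum_{k}(\partial_{v_k}D_{i,j})\,\partial_{x_k}p$; multiplying by $\nabla c$ on the right and contracting with $K\nabla\psi_c$ gives
\begin{equation*}
\big[K(\bm E\otimes\nabla p)\nabla c\big]\cdot\nabla\psi_c
= K\sum_{i,j,k}(\partial_{v_k}D_{i,j})\,(\partial_{x_k}p)\,(\partial_{x_j}c)\,(\partial_{x_i}\psi_c).
\end{equation*}
For the right-hand side I would apply the identical Kronecker convention of \eqref{Def:kron} to $\hat{\bm E}$: reading off \eqref{hatE}, its $(i,j)$ block is the row vector $(\partial_{v_i}D_{j,1},\partial_{v_i}D_{j,2})$, so the $(i,j)$ entry of $\hat{\bm E}\otimes\nabla c$ is $\sum_{l}(\partial_{v_i}D_{j,l})\,\partial_{x_l}c$. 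Applying this matrix to $\nabla\psi_c$ and contracting with $K\nabla p$ yields
\begin{equation*}
\nabla p\cdot\big[K(\hat{\bm E}\otimes\nabla c)\nabla\psi_c\big]
= K\sum_{i,j,l}(\partial_{x_i}p)\,(\partial_{v_i}D_{j,l})\,(\partial_{x_l}c)\,(\partial_{x_j}\psi_c).
\end{equation*}

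The identity then follows from a single relabeling of dummy indices. The crux I would highlight is that the definition of $\hat{\bm E}$ in \eqref{hatE} is precisely the cyclic re-indexing of $\bm E$: the entries of both matrices are the same scalars $\partial_{v_k}D_{i,j}$, but with the matrix row index, matrix column index, and velocity-derivative index permuted, concretely $(\hat{\bm E}_{i,j})_l=\partial_{v_i}D_{j,l}=(\bm E_{j,l})_i$. Performing the substitution $(i,j,l)\mapsto(k,i,j)$ in the right-hand triple sum turns its summand into $(\partial_{v_k}D_{i,j})(\partial_{x_k}p)(\partial_{x_j}c)(\partial_{x_i}\psi_c)$, which agrees term by term with the left-hand summand, giving the claim. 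I do not anticipate any genuine analytic difficulty; the only thing to watch is the bookkeeping, namely applying the product \eqref{Def:kron} to $\hat{\bm E}$ with exactly the same ``row-vector dotted against the inner gradient'' convention and keeping the three contractions against $\nabla p$, $\nabla c$, $\nabla\psi_c$ assigned to the correct slots, since transposing a single index would spuriously destroy the identity.
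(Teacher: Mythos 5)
Your proof is correct and follows essentially the same route as the paper: both sides are expanded componentwise via \eqref{Def:kron}, reduced to the sum $K\sum_{i,j,k}(\p_{v_k}D_{i,j})(\p_{x_k}p)(\p_{x_j}c)(\p_{x_i}\psi_c)$, and identified after rearrangement using the definition \eqref{hatE} of $\hat{\bm E}$. The paper writes the $2\times 2$ case out explicitly rather than with summation indices, but the argument is the same.
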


\begin{proof}
By \eqref{Def:kron} and the definition of $\bm E$, we have
\begin{equation}\begin{array}{l}\label{Lem2:E}
\ds  \big[(\bm E (\bm x, \bm v) \otimes \nabla  p) \nabla c\big] \cdot \nabla \psi_c\ds =
\begin{bmatrix}
\p_{\bm v} D_{1,1}  \nabla p  &\p_{\bm v}  D_{1,2}  \nabla p  \\[0.1in]
\p_{\bm v} D_{2,1}   \nabla p  & \p_{\bm v}  D_{2,2} \nabla p
\end{bmatrix}\nabla c \cdot \nabla \psi_c
\\[0.25in]
\ds  = \Big[(\p_{v_1}D_{1,1}\p_x p + \p_{v_2}D_{1,1}\p_y p)\p_x c + (\p_{v_1} D_{1,2}\p_x p + \p_{v_2} D_{1,2}\p_y p)\p_y c  \Big] \p_x \psi_c\\[0.15in]
\ds \;\; +\Big[(\p_{v_1}D_{2,1}\p_x p + \p_{v_2}D_{2,1}\p_y p)\p_x c + (\p_{v_1} D_{2,2}\p_x p + \p_{v_2} D_{2,2}\p_y p)\p_y c  \Big] \p_y \psi_c,
\end{array}
\end{equation}
which, after proper rearrangement, becomes
\begin{equation}\begin{array}{l}\label{Lem2:E:e2}
\hspace{-0.15in}\ds  \big[(\bm E (\bm x, \bm v) \otimes \nabla  p) \nabla c\big] \cdot \nabla \psi_c \\[0.15in]
\hspace{-0.1in}\ds = \p_x p \Big[(\p_{v_1}D_{1,1}\p_x c + \p_{v_1} D_{1,2}\p_y c)\p_x \psi_c + (\p_{v_1}D_{2,1}\p_x c + \p_{v_1}D_{2,2}\p_y c)\p_y \psi_c  \Big] \\[0.15in]
\hspace{-0.1in}\ds \quad + \p_y p \Big[(\p_{v_2}D_{1,1}\p_x c + \p_{v_2}D_{1, 2}\p_y c)\p_x \psi_c + (\p_{v_2}D_{2,1}\p_x c + \p_{v_2}D_{2,2}\p_y c)\p_y \psi_c  \Big] \\[0.15in]
\hspace{-0.1in} \ds = \nabla p \cdot
\begin{bmatrix}
\big( \p_{ v_1} D_{1,1}, \p_{ v_1} D_{1, 2}\big)  \nabla c  &\big( \p_{ v_1} D_{2,1}, \p_{v_1} D_{2,2}\big)   \nabla c  \\[0.1in]
\big( \p_{v_2} D_{1,1}, \p_{ v_2} D_{1, 2}\big)   \nabla c  & \big( \p_{ v_2} D_{2,1}, \p_{ v_2} D_{2,2}\big) \nabla c
\end{bmatrix}\nabla \psi_c
 \\[0.2in]
\hspace{-0.1in} \ds = \nabla  p \cdot \big[ (\hat{\bm E} \otimes \nabla c) \nabla \psi_c \big]
\end{array}
\end{equation}
by  \eqref{Def:kron} and \eqref{hatE}. We multiply $K$ on both sides of \eqref{Lem2:E:e2} to complete the proof.
\end{proof}

\begin{lemma}\label{Lem3}  For some scalar function $\psi_c$ and under the boundary conditions
\begin{equation}\label{Lem3:bd}
 \bm v(\bm x, t) \cdot \bm n(\bm x)= (\bm D \nabla c) \cdot \bm n(\bm x)= (\bm D \nabla \psi_c) \cdot \bm n(\bm x) =0 ~~ \mbox{on}~~  \p \om \times [0,T],
\end{equation}
we have $\ds \big[ (\hat{\bm E} (\bm x, \bm v) \otimes \nabla c ) \nabla \psi_c\big] \cdot \bm n(\bm x) =0$  {on} $ \p \om \times [0,T]$ with $\hat {\bm E}(\bm x, \bm v)$ defined  in \eqref{hatE}.
\end{lemma}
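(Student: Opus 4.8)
The plan is to reduce the two weighted no-flux conditions to plain Neumann conditions and then to recognize the target quantity as a single velocity-directional derivative, so that the vanishing becomes immediate. First I would invoke Lemma~\ref{Lem:D} twice: from $(\bm D \nabla c)\cdot\bm n =0$ and $(\bm D \nabla \psi_c)\cdot\bm n =0$ it yields $\nabla c \cdot \bm n =0$ and $\nabla \psi_c \cdot \bm n =0$ on $\p \om \times [0,T]$. Together with the given $\bm v\cdot \bm n =0$, these three orthogonality relations are the only boundary facts the argument will use.

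The key observation is that $\hat{\bm E}\otimes \nabla c$ is simply a Jacobian in the velocity variable. From \eqref{hatE} the $(i,j)$ block is $\hat{\bm E}_{i,j}=(\p_{v_i}D_{j,1},\p_{v_i}D_{j,2})$, so by the Kronecker product \eqref{Def:kron} its scalar entry is $(\hat{\bm E}\otimes\nabla c)_{i,j}=\p_{v_i}D_{j,1}\,\p_x c+\p_{v_i}D_{j,2}\,\p_y c=\p_{v_i}\big((\bm D\nabla c)_j\big)$, where I use that $\nabla c$ does not depend on $\bm v$. Multiplying by $\nabla\psi_c$ and summing the column index therefore gives $\big[(\hat{\bm E}\otimes\nabla c)\nabla\psi_c\big]_i=\p_{v_i}\big[(\bm D\nabla c)\cdot\nabla\psi_c\big]$. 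Writing $S:=(\bm D\nabla c)\cdot\nabla\psi_c$, the quantity to be shown to vanish is exactly the velocity-directional derivative $\p_{\bm v}S\cdot\bm n$.

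It then remains to differentiate $S$ and evaluate on the boundary. Using \eqref{D}, $S=(\phi d_m+d_t|\bm v|)(\nabla c\cdot\nabla\psi_c)+(d_l-d_t)\,(\bm v\cdot\nabla c)(\bm v\cdot\nabla\psi_c)/|\bm v|$. Differentiating in $\bm v$ and dotting with $\bm n$, every resulting term carries one of the factors $\bm v\cdot\bm n$ (from $\p_{\bm v}|\bm v|=\bm v/|\bm v|$), $\nabla c\cdot\bm n$, or $\nabla\psi_c\cdot\bm n$: the first term contributes $d_t(\bm v\cdot\bm n)/|\bm v|\,(\nabla c\cdot\nabla\psi_c)$, while the product-rule expansion of the second yields a combination of $(\nabla\psi_c\cdot\bm n)(\bm v\cdot\nabla c)$, $(\nabla c\cdot\bm n)(\bm v\cdot\nabla\psi_c)$, and $(\bm v\cdot\bm n)$ times lower-order factors. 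All three factors vanish on $\p\om\times[0,T]$ by the first paragraph, hence $\p_{\bm v}S\cdot\bm n=0$ there, which is the claim.

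The main obstacle is purely bookkeeping: getting the index placement in $\hat{\bm E}_{i,j}=(\p_{v_i}D_{j,1},\p_{v_i}D_{j,2})$ right so that the column sum collapses to $\p_{v_i}\big[(\bm D\nabla c)\cdot\nabla\psi_c\big]$, and keeping $\nabla c$ and $\nabla\psi_c$ frozen under $\p_{\bm v}$. Everything afterwards is routine. As an alternative in the spirit of Lemma~\ref{Lem1}, one could instead apply the (purely algebraic, pointwise) identity of Lemma~\ref{Lem2} with $\nabla p$ replaced by $\bm n$ to rewrite the target as $\big[(\bm E\otimes\bm n)\nabla c\big]\cdot\nabla\psi_c$, and then use $\bm v\cdot\bm n=0$ to reduce $\bm E\otimes\bm n$ to the rank-two form $\frac{d_l-d_t}{|\bm v|}(\bm n\bm v^\top+\bm v\bm n^\top)$, whose contraction once more produces only the vanishing factors $\nabla c\cdot\bm n$ and $\nabla\psi_c\cdot\bm n$.
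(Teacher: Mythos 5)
Your proof is correct, but it takes a genuinely different and cleaner route than the paper's. The paper splits $\hat{\bm E}=\hat{\bm E^t}+\hat{\bm E^l}$ according to the $d_t$- and $(d_l-d_t)$-parts of \eqref{Lem:hatE}, parametrizes $\bm n=(\cos\theta,\sin\theta)$, and verifies the claim by explicit componentwise trigonometric computations of the quantities $\hat Q_1,\hat Q_2$ in \eqref{Lem:hatQ1}--\eqref{Lem:hatQ2}, repeatedly substituting $v_1\cos\theta=-v_2\sin\theta$ and $\p_x c\cos\theta=-\p_y c\sin\theta$. You instead observe the structural identity $\big[(\hat{\bm E}\otimes\nabla c)\nabla\psi_c\big]_i=\p_{v_i}\big[(\bm D\nabla c)\cdot\nabla\psi_c\big]$ (with $\nabla c,\nabla\psi_c$ frozen), which is correct given the index placement in \eqref{hatE} and \eqref{Def:kron}, and then exploit the fact that the scalar $S=(\bm D\nabla c)\cdot\nabla\psi_c=(\phi d_m+d_t|\bm v|)\,\nabla c\cdot\nabla\psi_c+\tfrac{d_l-d_t}{|\bm v|}(\bm v\cdot\nabla c)(\bm v\cdot\nabla\psi_c)$ depends on $\bm v$ only through $|\bm v|$ and the two inner products, so that $\p_{\bm v}S$ lies in the span of $\bm v$, $\nabla c$, $\nabla\psi_c$ --- each orthogonal to $\bm n$ on the boundary by Lemma~\ref{Lem:D} and \eqref{Lem3:bd}. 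Your alternative via Lemma~\ref{Lem2} with $\nabla p$ replaced by $\bm n$ is the transposed view of the same computation and is equally valid, since that lemma is a pointwise algebraic rearrangement. What your approach buys is brevity, a conceptual explanation of \emph{why} the boundary term vanishes, and dimension independence: the identical argument disposes of the three-dimensional analogue (Lemma~\ref{Lem3d3}), for which the paper requires the considerably heavier preparatory Lemma~\ref{LemLPre3d} and the term-collection manipulations \eqref{Lem3d3:Ele2}--\eqref{Lem3d3:hatQ2}. What the paper's computation buys, arguably, is that it exhibits the exact cancellation pattern entry by entry, which the authors reuse when building the split matrices in the 3D section.
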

\begin{proof}
Recall the definition for $\hat {\bm E}$ in \eqref{hatE}, i.e.,
\begin{equation}\begin{array}{l}\label{Lem3:hatE}
\hat {\bm E} =
\begin{bmatrix}
\big( \p_{ v_1} D_{1,1}, \p_{ v_1} D_{1, 2}\big)  &\big( \p_{ v_1} D_{2,1}, \p_{v_1} D_{2,2}\big)   \\[0.1in]
\big( \p_{v_2} D_{1,1}, \p_{ v_2} D_{1, 2}\big)    & \big( \p_{ v_2} D_{2,1}, \p_{ v_2} D_{2,2}\big)
\end{bmatrix},
\end{array}
\end{equation}
we then combine this with the split in \eqref{Lem:hatE} to accordingly split $  \hat {\bm E} := \hat {\bm  E^t}+ \hat  {\bm E^l}$  as follows:
\begin{equation}\begin{array}{l}\label{Lem3:Matrix2}
\hspace{-0.15in}\ds
{\footnotesize\hat{\bm E^t}: =\begin{bmatrix}
  \big([\bm E_d^t]_1,0\big) & \big(0, [\bm E_d^t]_1\big)  \\[0.1in]
  \big([\bm E_d^t]_2,0\big)  &  \big(0, [\bm E_d^t]_2\big)
\end{bmatrix},
\; \hat{\bm E^l}: =
\begin{bmatrix}
  \big([\bm E_{1,1}^l]_1,\p_{v_1} D_{1,2} \big) & \big( \p_{ v_1} D_{2,1}, [\bm E_{2,2}^l]_1\big)  \\[0.1in]
  \big([\bm E_{1,1}^l]_2,\p_{v_2} D_{1,2} \big) & \big( \p_{ v_2} D_{2,1}, [\bm E_{2,2}^l]_2\big)
\end{bmatrix}
}
\end{array},
\end{equation}
where $[\bm a]_i$ for $i =1, 2$ refers to the $i$-th element of the vector  $\bm a \in \mathbb R^{2}$.

We first show that $\big[(\hat{\bm E^t} \otimes \nabla c) \nabla \psi_c\big] \cdot \bm n(\bm x) =0$ on  $\p \om \times [0,T]$.
%By Lemma \ref{Lem:D} and $(\bm D \nabla c) \cdot \bm n(\bm x) =0 $ on  $\p \om \times [0,T]$ in \eqref{Model}, we have $\nabla c \cdot \bm n(\bm x) =0$ on  $\p \om \times [0,T]$.
By \eqref{bd:v}, \eqref{Def:kron}, \eqref{Lem:hatE} and \eqref{Lem3:Matrix2}, we have
\begin{equation}\begin{array}{cl}\label{Lem3:Et}
\ds  \big[(\hat{\bm E^t} \otimes \nabla c) \nabla \psi_c\big] \cdot \bm n(\bm x) & \hspace{-0.1in}\ds =\f{1}{|\bm v|}
\begin{bmatrix}
v_1 \p_x c   & v_1 \p_y c \\
v_2 \p_x c &v_2 \p_y c
\end{bmatrix}
\nabla \psi_c \cdot \bm n(\bm x)\\[0.175in]
& \hspace{-0.1in}\ds = \f{1}{|\bm v|} \big[v_1 \cos\theta \big(\p_x c \p_x \psi_c + \p_y c \p_y \psi_c \big) \\[0.1in]
& \ds \qquad \quad  +v_2 \sin\theta \big(\p_x c \p_x \psi_c + \p_y c \p_y \psi_c \big) \big]  = 0.
\end{array}
\end{equation}

We then evaluate $\big[(\hat {\bm E^l}\otimes \nabla c) \nabla \psi_c\big] \cdot \bm n(\bm x)$  on  $\p \om \times [0,T]$ by following a similar procedure in \eqref{Lem:El} as follows
\begin{equation}\begin{array}{cl}\label{Lem3:El}
&\hspace{-0.3in}  \ds\big[(\hat {\bm E^l}\otimes \nabla c) \nabla \psi_c\big] \cdot \bm n(\bm x)  \\[0.1in]
%\ds & \hspace{-0.1in}\ds =
%\begin{bmatrix}
%  \big([\bm E_{1,1}^l]_1,\p_{v_1} D_{1,2} \big) \cdot \nabla c & \big( \p_{ v_1} D_{2,1}, [\bm E_{2,2}^l]_1\big)\cdot \nabla c  \\[0.1in]
%  \big([\bm E_{1,1}^l]_2,\p_{v_2} D_{1,2} \big) \cdot \nabla c & \big( \p_{ v_2} D_{2,1}, [\bm E_{2,2}^l]_2\big) \cdot \nabla c
%\end{bmatrix}
% \nabla \psi_c \cdot \bm n(\bm x)\\[0.25in]
& \hspace{-0.1in}\ds
 =
 \begin{bmatrix}
 \big([\bm E_{1,1}^l]_1,\p_{v_1} D_{1,2} \big)  \nabla c  \p_x \psi_c  +  \big( \p_{ v_1} D_{2,1}, [\bm E_{2,2}^l]_1\big) \nabla c  \p_y \psi_c  \\[0.1in]
 \big([\bm E_{1,1}^l]_2,\p_{v_2} D_{1,2} \big) \nabla c\p_x \psi_c  +\big( \p_{ v_2} D_{2,1}, [\bm E_{2,2}^l]_2\big)  \nabla c\p_y \psi_c
\end{bmatrix} \cdot \bm n(\bm x)\\[0.25in]
& \hspace{-0.1in}\ds = \p_x \psi_c \Big[ \cos\theta \big([\bm E_{1,1}^l]_1,\p_{v_1} D_{1,2} \big)  \nabla c  + \sin\theta \big([\bm E_{1,1}^l]_2,\p_{v_2} D_{1,2} \big)  \nabla c\big)\Big] \\[0.1in]
& \hspace{-0.1in}\ds \qquad + \p_y \psi_c \Big[ \cos\theta\big( \big( \p_{ v_1} D_{2,1}, [\bm E_{2,2}^l]_1\big) \nabla c + \sin\theta  \big( \p_{ v_2} D_{2,1}, [\bm E_{2,2}^l]_2\big) \nabla c\Big] \\[0.1in]
& \hspace{-0.1in}\ds = :  \f{d_l-d_t}{|\bm v|^3} \big(\hat Q_1 \p_x \psi_c  + \hat Q_2 \p_y \psi_c\big).
\end{array}
\end{equation}
Here $\hat Q_1$ could be evaluated by \eqref{Lem:hatE}, \eqref{hatE},  and $\p_x c \cos\theta =- \p_y c \sin\theta $ on  $\p \om \times [0,T]$ derived from $(\bm D \nabla c) \cdot \bm n(\bm x) =0 $ on  $\p \om \times [0,T]$  by Lemma \ref{Lem:D}
\begin{equation}\begin{array}{cl}\label{Lem:hatQ1}
\hspace{-0.15in}\ds \hat Q_1
& \hspace{-0.125in} \ds = \cos\theta  \big[ v_1(v_1^2 + 2v_2^2) \p_x c + v_2^3 \p_y c \big]  + \sin\theta\big[-v_1^2v_2\p_x c + v_1^3 \p_y c \big] \\[0.1in]
\hspace{-0.15in}& \hspace{-0.125in} \ds = \big(-\sin\theta\p_y c\big) v_1(v_1^2 + 2v_2^2)   + \cos\theta  v_2^3 \p_y c   + \sin\theta\big[-v_1^2v_2\p_x c + v_1^3 \p_y c \big] \\[0.1in]
\hspace{-0.15in}& \hspace{-0.125in} \ds = - 2\sin\theta v_1v_2^2 \p_y c  + \cos\theta  v_2^3 \p_y c  - \sin\theta v_1^2v_2\p_x c,
\end{array}
\end{equation}
and $\hat Q_2$ could be evaluated in   a similar manner as
\begin{equation}\begin{array}{cl}\label{Lem:hatQ2}
\hspace{-0.15in}\ds \hat Q_2
& \hspace{-0.125in} \ds = \cos\theta  \big[ v_2^3 \p_x c - v_1 v_2^2 \p_y c \big]  + \sin\theta\big[v_1^3 \p_x c + v_2 (2v_1^2 +v_2^2) \p_y c \big] \\[0.1in]
& \hspace{-0.125in} \ds = \big(-\sin\theta \p_y c\big)  v_2^3 - \cos\theta v_1 v_2^2 \p_y c   + \sin\theta\big[v_1^3 \p_x c + v_2 (2v_1^2 +v_2^2) \p_y c \big] \\[0.1in]
\hspace{-0.15in}& \hspace{-0.125in} \ds = 2\sin\theta v_1^2v_2 \p_y c -  \cos\theta  v_1 v_2^2 \p_y c +\sin\theta v_1^3\p_x c.
\end{array}
\end{equation}
We then combine \eqref{Lem:hatQ1}--\eqref{Lem:hatQ2} to further reformulate \eqref{Lem3:El} as
\begin{equation}\begin{array}{l}\label{Lem3:Ele2}
  \ds \big[(\hat{\bm E^l}\otimes \nabla c) \nabla \psi_c\big] \cdot \bm n(\bm x)  \\[0.15in]
\ds \quad = \f{d_l-d_t}{|\bm v|^3} \Big[\p_x \psi_c \big(- 2\sin\theta v_1v_2^2 \p_y c  + \cos\theta  v_2^3 \p_y c  - \sin\theta v_1^2v_2\p_x c\big)  \\[0.15in]
\ds   \qquad \qquad  \qquad +\p_y \psi_c \big(2\sin\theta v_1^2v_2 \p_y c -  \cos\theta  v_1 v_2^2 \p_y c +\sin\theta v_1^3\p_x c\big)\Big]\\[0.125in]
\ds \quad = \f{d_l-d_t}{|\bm v|^3} \Big[ 2v_1 v_2 \p_y c \big(-v_2 \p_x \psi_c \sin\theta + v_1 \p_y \psi_c \sin\theta \big)\\[0.125in]
\ds \qquad \qquad  \qquad + v_2^2 \p_y c  \big( v_2 \p_x \psi_c \cos\theta - v_1\p_y \psi_c \cos\theta \big)\\[0.125in]
\ds \qquad \qquad  \qquad  + v_1^2 \p_x c \big(-v_2 \p_x \psi_c \sin\theta + v_1 \p_y \psi_c\sin\theta  \big)\Big]=0
\end{array}
\end{equation}
on  $\p \om \times [0,T]$. Here we have used  the fact that
\begin{equation}\begin{array}{l}\label{bdQ}
  \ds v_2 \p_x \psi_c \sin\theta =  \p_x \psi_c \big(v_2 \sin\theta\big)= - \p_x \psi_c(v_1  \cos\theta) = v_1(\p_y \psi_c \sin\theta),\\[0.1in]
  \ds v_2 \big( \p_x \psi_c \cos\theta\big) = - \p_y \psi_c\big(v_2  \sin\theta\big) = \p_y \psi_c \big(v_1  \cos\theta\big),
\end{array}
\end{equation}
which are derived from \eqref{bd:v}  and $\nabla \psi_c \cdot \bm n(\bm x) =0$ on  $\p \om \times [0,T]$ by Lemma \ref{Lem:D}. We thus complete the proof.
\end{proof}
\section{Application to optimal control}\label{Sect:Cond}

 We consider the optimal control problem for the incompressible miscible flow model \eqref{VtFDEs} for $d=2$ and then derive its  first-order optimality condition based on the boundary conditions in \S \ref{Sect:Lem}.

 We follow the conventional formulation  of the optimal control \cite{Gun} to modify the source terms in the first and third equations of  \eqref{VtFDEs}  as $q+h$, where  $h$ denotes the control variable that adjusts the injection and extraction rate of the injecting fluid from the admissible set
\begin{equation}\label{Uad}
U_{ad} :=  \big\{ h\in L^2(0,T;L^2(\Omega)): \int_\Omega h(\bm x,t) d\bm x =0 ~\text{ for } a. e. ~ t \in[0,T] \big\}.\end{equation}
 To facilitate the analysis, we substitute $ \bm v = -K(\bm x) \nabla p$ in $\nabla \cdot (c \bm v)$ in the first equation in the resulting system and  combine the second and the third equations   to reformulate
\begin{equation}\label{Model}\begin{array}{cl}
\hspace{-0.15in}	\ds \mathcal L_c(c, p, \bm v):= \phi \p_t c  \! -\!  \nabla \cdot (\bm D(\bm x, \bm v) \nabla c) \! -\! \nabla \cdot (c K(\bm x) \nabla p)= q+h,\,&\hspace{-0.075in} \mathrm{in} \,\Omega \times (0,T]; \\ [0.05in]
\ds   \ds \mathcal L_p( p) :=- \nabla \cdot (K(\bm x) \nabla p)  = q +h, ~&\hspace{-0.075in} \mathrm{in} ~\Omega \times [0,T]; \\[0.05in]
	c(\bm x,0) = 0, \quad &\hspace{-0.075in} \mathrm{on}~\om; \\[0.05in]
	 \nabla p(\bm x, t) \cdot \bm n(\bm x)  = (\bm D\nabla c)(\bm x,t) \cdot \bm n (\bm x) = 0,  & \hspace{-0.075in}\mathrm{on} \, \p \om \times [0,T].
\end{array}\end{equation}
We note that the combination of the  equation $\nabla \cdot \bm v = q +h$   and the boundary condition $\bm v(\bm x, t) \cdot \bm n(\bm x) = 0 ~\mathrm{on} ~ \p \om$ in \eqref{VtFDEs}  lead to the compatibility condition $\int_\Omega (q + h)(\bm x, t) d \bm x =0$ for a.e. $t \in [0, T]$,  which, combined with {\it assumption (b)}, further gives $\int_\Omega  h(\bm x, t) d \bm x =0$ for a.e. $t \in [0, T]$  as imposed in the admissible set \eqref{Uad}.

 We consider the optimal control problem \cite{Gun}
\begin{equation}\label{ObjFun}
\hspace{-0.15in}\ds \min_{h \in U_{ad}} J=\frac{1}{2}\|c-c_d\|^2_{L^2(0,T;L^2(\Omega))} + \frac{\alpha}{2}\|p-p_d\|^2_{L^2(0,T;L^2(\Omega))}  +\frac{\gamma}{2}\|h\|^2_{L^2(0,T;L^2(\Omega))},
\end{equation}
where  $c_d$ and $p_d$ denote the prescribed target states of $c$ and $p$, respectively, $\alpha \ge 0$ and $\gamma>0$ are parameters.
 The control mechanism is to manipulate $h$ to adjust the pressure of the fluid mixture and the concentration to the ideal distributions with the relatively low cost balanced by the parameter $\gamma$.

\begin{theorem}\label{thm:OptCond}
%Suppose assumptions (a)--(c) hold.
There exist adjoint states $\psi_c$ and $\psi_p$ with $ \int_\Omega \psi_p(\bm x, t) d \bm x =0$ such that $(c,p, \psi_c, \psi_p)$ satisfies the state equations in (\ref{Model}) and the adjoint state equations
\begin{equation}\label{AdjEq}\begin{array}{ll}
\hspace{-0.175in} \ds  \mathcal L^*_{c}(\psi_c, p, \bm v)  = c(\bm x,t;h)-c_d(\bm x,t),    & \ds \hspace{-0.1in}~ \mathrm{in} ~ \Omega\times[0,T); \\ [0.05in]
\hspace{-0.175in} \mathcal L^*_{p}(\psi_p, c, \psi_c, \bm v)  = \alpha\big(p(\bm x,t;h)-p_d(\bm x,t)\big),  & \ds \hspace{-0.1in}~ \mathrm{in} ~ \Omega\times[0,T]; \\ [0.05in]
\hspace{-0.175in}\ds \psi_c(\bm x,T)=0 ~ \mathrm{on} ~  \om, ~(\bm D\nabla \psi_c )\cdot \bm n (\bm x) =\nabla \psi_p\cdot \bm n (\bm x)= 0, & \ds \hspace{-0.1in} ~ \mathrm{on} ~ \p \om \times [0,T]
\end{array}\end{equation}
with
\begin{equation}\label{Adj:opr}\begin{array}{ll}
\hspace{-0.175in} \ds  \mathcal L^*_{c}(\psi_c, p, \bm v)  : = - \phi\p_t  \psi_c  - \nabla \cdot (\bm D(\bm x, \bm v) \nabla \psi_c)   +  K(\bm x) \nabla p \cdot \nabla \psi_c ,\\[0.1in]
\hspace{-0.175in} \ds \ds \mathcal L^*_{p}(\psi_p, c, \psi_c, \bm v) : =  \nabla \cdot \big( K(\bm x) (\hat {\bm E} (\bm x, \bm v)\otimes  \nabla c)  \nabla \psi_c \big) - \nabla \cdot (K(\bm x) \nabla \psi_p)  \\[0.1in]
\ds \qquad \qquad \qquad \qquad \qquad - \nabla \cdot (c  K(\bm x) \nabla \psi_c)
\end{array}\end{equation}
and
\begin{equation}\label{Vareq}
%\ds \ds h = -\f{\bar c \psi_c + \psi_p}{\gamma} +\f1{\gamma|\Omega|} \int_\Omega \bar c \psi_c\, d \bm x.
\ds \int_0^T \int_\Omega  \big( \psi_c + \psi_p+ \gamma h \big)(v-h)  d\bm x dt    =  0, \quad v \in U_{ad}.
\end{equation}

\end{theorem}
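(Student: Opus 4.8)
The goal is to derive the first-order optimality conditions for the minimization problem \eqref{ObjFun} subject to the state system \eqref{Model}. The standard approach is the Lagrangian/adjoint method. First I would form the Lagrangian by introducing adjoint variables $\psi_c$ and $\psi_p$ paired with the two state equations in \eqref{Model} through space-time $L^2$ inner products, giving
\begin{equation*}
\mathcal{L}(c,p,h,\psi_c,\psi_p) = J - \int_0^T\!\!\int_\Omega \big(\mathcal{L}_c(c,p,\bm v)-q-h\big)\psi_c\,d\bm x\,dt - \int_0^T\!\!\int_\Omega \big(\mathcal{L}_p(p)-q-h\big)\psi_p\,d\bm x\,dt.
\end{equation*}
Setting the variations with respect to the state perturbations $\delta c$ and $\delta p$ to zero should produce the adjoint equations, while the variation with respect to $h$ yields the variational inequality \eqref{Vareq}. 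The essential task is integration by parts in both space and time to shift all derivatives off $\delta c,\delta p$ and onto the adjoints, thereby identifying $\mathcal{L}_c^*$ and $\mathcal{L}_p^*$ and the correct boundary and terminal conditions.

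The key computational steps are as follows. For the $\delta c$ variation, differentiating $\mathcal{L}_c$ with respect to $c$ contributes $\phi\partial_t(\delta c)$, $-\nabla\cdot(\bm D\nabla\delta c)$, the term $-\nabla\cdot(\delta c\,K\nabla p)$, and crucially the term arising from the $\bm v$-dependence of $\bm D$, namely $-\nabla\cdot\big((\partial_{\bm v}\bm D\,\delta\bm v)\nabla c\big)$ with $\delta\bm v=-K\nabla(\delta p)$; the integration by parts in time converts $\phi\partial_t\delta c$ into $-\phi\partial_t\psi_c$ and, because $\delta c(\bm x,0)=0$ from the initial condition, forces the terminal condition $\psi_c(\bm x,T)=0$. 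For the $\delta p$ variation, the derivative of $\mathcal{L}_p$ gives the $-\nabla\cdot(K\nabla\psi_p)$ term, the convective coupling $-\nabla\cdot(cK\nabla\psi_c)$ comes from differentiating $-\nabla\cdot(cK\nabla p)$ in $\mathcal{L}_c$, and differentiating $\bm D(\bm x,-K\nabla p)$ with respect to $p$ produces the block term $\nabla\cdot\big(K(\hat{\bm E}\otimes\nabla c)\nabla\psi_c\big)$ after invoking Lemma \ref{Lem2} to transpose the $\bm E$-structure into the $\hat{\bm E}$-structure. This is precisely where the custom Kronecker products \eqref{Def:kron} and the identity \eqref{Lem2:eq1} do the bookkeeping.

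The main obstacle, and the real content of the theorem, is verifying that the boundary terms generated by all these integrations by parts actually vanish, so that the adjoint system is well posed with the stated homogeneous boundary conditions $(\bm D\nabla\psi_c)\cdot\bm n=\nabla\psi_p\cdot\bm n=0$. This is exactly what Lemmas \ref{Lem1} and \ref{Lem3} were built to supply: when integrating by parts the $\bm v$-dependent term in the $\delta c$ equation, the boundary integral involves $\big[(\bm E\otimes\nabla p)\nabla c\big]\cdot\bm n$, which Lemma \ref{Lem1} shows is zero under $\bm v\cdot\bm n=\nabla p\cdot\bm n=(\bm D\nabla c)\cdot\bm n=0$; and in the $\delta p$ equation the boundary integral involves $\big[(\hat{\bm E}\otimes\nabla c)\nabla\psi_c\big]\cdot\bm n$, which Lemma \ref{Lem3} annihilates once we additionally impose $(\bm D\nabla\psi_c)\cdot\bm n=0$. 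I would therefore impose these adjoint boundary conditions precisely so that the lemmas apply, and use Lemma \ref{Lem:D} to interchange $(\bm D\nabla\,\cdot)\cdot\bm n=0$ with the plain normal-derivative condition wherever the divergence-theorem boundary terms of the scalar Laplacian-type pieces appear.

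Finally, collecting the interior terms gives the adjoint operators in \eqref{Adj:opr} and the adjoint equations \eqref{AdjEq}; the remaining variation in $h$, using $\partial_h J=\gamma h$ and the pairing of $\delta h$ against $\psi_c+\psi_p$ from both state equations, yields $\int_0^T\!\int_\Omega(\psi_c+\psi_p+\gamma h)\,\delta h\,d\bm x\,dt=0$, and since admissible variations satisfy $\delta h=v-h$ with $v\in U_{ad}$, this is exactly \eqref{Vareq}. The zero-mean condition $\int_\Omega\psi_p\,d\bm x=0$ is imposed to fix the additive constant left undetermined by the pure Neumann adjoint pressure equation, mirroring \emph{assumption (c)} on $p$.
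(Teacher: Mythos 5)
Your proposal is correct and follows essentially the same route as the paper: the same linearization of $\bm D(\bm x,\bm v)$ producing the $\bm E\otimes\nabla\delta p$ term, Lemma \ref{Lem2} to transpose $\bm E$ into $\hat{\bm E}$, and Lemmas \ref{Lem:D}, \ref{Lem1}, \ref{Lem3} deployed at exactly the boundary integrals you identify. The only difference is presentational: the paper avoids the formal Lagrangian by working with the difference quotients $\delta_\varepsilon c$, $\delta_\varepsilon p$ and their sensitivity equations, passing to the limit $\varepsilon\to 0$ at the end, which makes the differentiability of the control-to-state map explicit rather than assumed.
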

\begin{proof}
 For any $v \in U_{ad}$ and $0 < \varepsilon \ll 1$, let $\delta h := v-h$. We have $h + \varepsilon \delta h \in U_{ad}$ by the convexity of $U_{ad}$. Let $\delta_\varepsilon c := \big ( c(h+\varepsilon \delta h)-c(h) \big)/\varepsilon$ and $\delta_\varepsilon p := \big ( p(h+\varepsilon \delta h)-p(h) \big)/\varepsilon$, we learn from the first governing equation in \eqref{Model} that $\delta_\varepsilon \mathcal L_c = \delta h$ with
 \begin{equation}\label{Opr:equ:del}
 \begin{array}{l}
 \ds \delta_\varepsilon \mathcal L_c : =   \f1{\varepsilon}\big[ \mathcal L_c(c(h+\varepsilon \delta h), p(h+\varepsilon \delta h), \bm v(h+\varepsilon \delta h)) -  \mathcal L_c(c(h), p(h), \bm v(h)) \big].
\end{array}
\end{equation}
  To further simplify \eqref{Opr:equ:del}, we apply the mean-value theorem to get
\begin{equation}\label{Jv:rel1}\begin{array}{l}
 \ds \frac{1}{\varepsilon} \big[ \nabla \cdot \big(\bm D(\bm x, \bm v(h+\varepsilon \delta h)) \nabla c(h+\varepsilon \delta h) \big)
\ds -  \nabla \cdot (\bm D\big(\bm x, \bm v(h)) \nabla c(h) \big)\big]\\[0.075in]
  \ds   \qquad =   \f1{\varepsilon} \nabla \cdot \big ({\big(\bm D(\bm x, \bm v(h+\varepsilon \delta h))-\bm D(\bm x, \bm v(h))\big)} \nabla  c (h+\varepsilon \delta h)\big) \\[0.1in]
 \ds \qquad \qquad \quad + \nabla \cdot \big(\bm D(\bm x, \bm v(h)) \nabla \delta_\varepsilon c \big) \\[0.075in]
  \ds  \quad =  - \nabla \cdot \big (K(\bm x) (\bm E(\bm x, \bm \zeta) \otimes  \nabla \delta_\varepsilon p)  \nabla  c(h+\varepsilon \delta h) \big)+ \nabla \cdot \big(\bm D(\bm x, \bm v(h)) \nabla \delta_\varepsilon c \big)
\end{array}
\end{equation}
with the 2-by-2 block matrix $\bm E \in\mathbb R^{2\times 4} $  defined in Lemma \ref{Lem1}.
 We incorporate this with \eqref{Model}, \eqref{Opr:equ:del}, and
\begin{equation}\label{Jv:rel2}\begin{array}{l}
  \ds \frac{1}{\varepsilon} \big[ \nabla \cdot \big(c(h+\varepsilon \delta h) K(\bm x) \nabla p(h+\varepsilon \delta h)\big)
\ds -  \nabla \cdot \big(c(h) K(\bm x) \nabla p(h)\big)\big]\\[0.1in]
\ds \quad =\frac{1}{\varepsilon} \big[ \nabla \cdot \big(c(h+\varepsilon \delta h) K(\bm x) \nabla p(h+\varepsilon \delta h)\big) -
  \nabla \cdot \big(c(h+\varepsilon \delta h) K(\bm x) \nabla p(h)\big)\big]\\[0.1in]
  \ds \qquad + \frac{1}{\varepsilon} \big[ \nabla \cdot \big(c(h+\varepsilon \delta h) K(\bm x) \nabla p(h)\big) -
  \nabla \cdot \big(c(h) K(\bm x) \nabla p(h)\big)\big]\\[0.1in]
  \ds \quad  =    \nabla \cdot \big(c(h+\varepsilon \delta h) K(\bm x) \nabla \delta_\varepsilon p\big) +\nabla \cdot \big(\delta_ \varepsilon c K(\bm x) \nabla p(h)\big),
\end{array}
\end{equation}
 to deduce that  $\delta_\varepsilon c$  satisfies
\begin{equation}\label{Jv:e1}\begin{array}{cl}
 \mathcal L_c(\delta_\varepsilon c, p, \bm v) + \nabla \cdot \big (K(\bm x) \big(\bm E(\bm x, \bm \zeta) \otimes \nabla \delta_\varepsilon p\big)  \nabla  c(h+\varepsilon \delta h) \big)\\[0.1in]
 \ds\qquad \qquad \qquad - \nabla \cdot \big(c(h+\varepsilon \delta h) K(\bm x) \nabla \delta_\varepsilon p\big)= \delta h, & \hspace{-0.1in}~ \mathrm{in}~~ \Omega\times(0,T]; \\[0.05in]
%\hspace{-0.15in}\ds \phi(\p_t \delta_\varepsilon c + \kappa \p_t^{\alpha(t)} \delta_\varepsilon c)  -  \nabla \cdot (D(\bm x, \bm v) \nabla \delta_\varepsilon c) \\[0.05in]
%\hspace{-0.1in} + \nabla \cdot \Big (K(\bm x) \p_{\bm v } D(\bm x, \zeta) \cdot \nabla \delta_\varepsilon p \nabla  c(h+\varepsilon \delta h) \Big)\\[0.05in]
%\hspace{-0.05in} \ds  - \nabla \cdot (\delta_ \varepsilon c K(\bm x) \nabla p(h))  -   \nabla \cdot (c(h+\varepsilon \delta h) K(\bm x) \nabla \delta_\varepsilon p)= \bar c \delta h, & \hspace{-0.1in}~ \mathrm{in}~~ \Omega\times(0,T]; \\[0.05in]
\ds \ds  \delta_\varepsilon c(\bm x,0)=0, & \hspace{-0.1in}~  \mathrm{on} ~~  \om; \\[0.05in]
\ds \ds \nabla \delta_\varepsilon c(\bm x,t) \cdot \bm n (\bm x) = 0,  & \hspace{-0.1in} ~ \mathrm{on} ~~ \p \om \times [0,T].
\end{array}\end{equation}
We note that the boundary in \eqref{Jv:e1} is derived from $(\bm D(\bm x, \bm v) \nabla c)(\bm x,t) \cdot \bm n (\bm x) = 0$ on $ \p \om \times [0,T]$ in \eqref{Model} and thus, by Lemma \ref{Lem:D}, $ \nabla c(\bm x,t) \cdot \bm n (\bm x) = 0$ on $ \p \om \times [0,T]$.
In addition, we note from   \eqref{Model} that $\delta_\varepsilon \mathcal L_p = \mathcal L_p(\delta_\varepsilon p)$ and $\delta_\varepsilon p$ satisfies
\begin{equation}\label{Jv:e2}\begin{array}{cl}
 %\ds - \nabla \cdot (K(\bm x) \nabla \delta_\varepsilon p)  = \delta h, &~~ \mathrm{in} ~\Omega \times [0,T]; \\[0.05in]
 \ds \mathcal L_p(\delta_\varepsilon p) = -\nabla \cdot (K(\bm x) \nabla \delta_\varepsilon p) =\delta h,~~&\hspace{-0.1in}\ds  \mathrm{in} ~\Omega \times [0,T], \\[0.1in]
 \ds  \nabla \delta_\varepsilon p(\bm x, t) \cdot \bm n (\bm x) = 0,  ~~ &\hspace{-0.1in}\ds \mathrm{on} ~ \p \om \times [0,T].
\end{array}\end{equation}
We then use $c(h+\varepsilon \delta h) - c(h) = \varepsilon \delta_\varepsilon c$,  $p(h+\varepsilon \delta h) - p(h) = \varepsilon \delta_\varepsilon p$, \eqref{Uad}, and \eqref{AdjEq} to differentiate $\hat J(h) = J(c(h), p(h), h)$ to obtain
\begin{equation}\label{Jv:e4}\begin{array}{rl}
\hspace{-0.15in} \ds 0 & \hspace{-0.125in} \ds = \p_h \hat J(h) \delta h = \lim_{\varepsilon \rightarrow 0} \varepsilon^{-1}  \big ( \hat J(h+\varepsilon\delta h) -  \hat J(h) \big ) \\[0.1in]
& \hspace{-0.2in} \ds = \lim_{\varepsilon \rightarrow 0} (2\varepsilon)^{-1} \big [\|c(h+\varepsilon\delta h)-c_d\|^2_{L^2(L^2)}   +\alpha  \|p(h+\varepsilon\delta h)-p_d\|^2_{L^2(L^2)} \\[0.1in]
 & \hspace{-0.2in} \,+ \gamma \| h+\varepsilon\delta h\|^2_{L^2(L^2)} - \|c(h)-c_d\|^2_{L^2(L^2)} - \alpha \|p(h)-p_d\|^2_{L^2(L^2)} - \gamma \| h\|^2_{L^2(L^2)} \big ] \\[0.1in]
& \hspace{-0.2in} \ds = \int_0^T \int_\Omega (c(h)-c_d) \delta_\varepsilon c +  \alpha(p(h)-p_d) \delta_\varepsilon p \; d\bm x dt
+ \int_0^T\int_\Omega \gamma h \delta h \; d\bm x dt \\[0.1in]
& \hspace{-0.2in} \ds = \int_0^T \int_\Omega \mathcal L^*_{c}(\psi_c, p, \bm v) \delta_\varepsilon c + \mathcal L^*_{p}(\psi_p, c, \psi_c, \bm v)  \delta_\varepsilon p + \gamma h \delta h \,d\bm x dt.
%& \hspace{-0.2in} \ds = \int_0^T \int_\Omega  \big(\phi(-\p_t \psi_c \!+\! \kappa \, {}^R\hat\p_t^{\alpha(t)} \psi_c) \!-\!  \nabla \cdot (D(\bm x, \bm v) \nabla \psi_c)  \!+ \! K(\bm x) \nabla p \cdot \nabla \psi_c \big ) \delta_\varepsilon c \, d\bm x dt\\[0.15in]
%& \hspace{-0.2in} \ds  + \int_0^T \int_\Omega  \Big( \nabla \cdot (\p_{\bm v} D (\bm x, \bm v) K(\bm x) \nabla c \cdot \nabla \psi_c)-  \nabla \cdot (K(\bm x) \nabla \psi_p) \\[0.1in]
%& \qquad \qquad  \ds  -  \nabla \cdot (c  K(\bm x) \nabla \psi_c)  \Big ) \delta_\varepsilon p \, d\bm x dt  +   \int_0^T\int_\Omega \gamma h \delta h \, d\bm x dt.
\end{array}\end{equation}
We combine the boundary condition for $\delta_\varepsilon c$ in \eqref{Jv:e1} and Lemma \ref{Lem:D} to obtain $(\bm D(\bm x, \bm v) \nabla \delta_\varepsilon c ) (\bm x,t) \cdot \bm n (\bm x) = 0$ on $\p \om \times [0,T]$. We utilize this with \eqref{Model} and   \eqref{Adj:opr} to integrate the first term on the right-hand side of \eqref{Jv:e4} by parts to arrive at
\begin{equation}\label{Jv:e5a}\begin{array}{l}
 \ds \int_0^T \int_\Omega \mathcal L^*_{c}(\psi_c, p, \bm v) \delta_\varepsilon c \, d\bm x dt\\[0.15in]
%=\int_0^T \int_\Omega  \big(\phi(-\p_t \psi_c + \kappa \, {}^R\hat\p_t^{\alpha(t)} \psi_c) - \nabla \cdot (D(\bm x,  \bm v) \nabla \psi_c)  \!+ \! K(\bm x) \nabla p \cdot \nabla \psi_c \big ) \delta_\varepsilon c \, d\bm x dt\\[0.15in]
\ds  = \int_0^T \int_\Omega  \big( \phi\p_t \delta_\varepsilon c -  \nabla \cdot (\bm D(\bm x, \bm v) \nabla \delta_\varepsilon c )  -  \nabla \cdot (  \delta_\varepsilon c K(\bm x) \nabla p ) \big ) \psi_c \; d\bm x dt\\[0.15in]
 \ds = \int_0^T \int_\Omega \mathcal L_{c}(\delta_\varepsilon c, p, \bm v) \psi_c \, d\bm x dt.
\end{array}\end{equation}

Due to the coupling and nonlinearity of the forward problem \eqref{Model} as well as the complexity of the Bear-Scheidegger tensor,  its optimal control problem involves additional two terms in \eqref{Jv:e1}, i.e. the second and the third left-hand side terms of (\ref{Jv:e1}), that exhibit complicated structure. Consequently, several complex boundary conditions are required in the derivation of the first-order optimality condition, which is not typically encountered in the forward problem \eqref{Model} and optimal control of its linear analogues and thus requires substantial investigations.
Thus, novel lemmas are proved in \S \ref{Sect:Lem} to accommodate this issue and are applied in the following critical derivation
\begin{equation}\begin{array}{l}\label{Int:E}
\ds \int_0^T \int_\Omega  \nabla  \cdot \big(K(\bm x) ( \hat{\bm E} (\bm x, \bm v)  \otimes \nabla c) \nabla \psi_c\big) \delta_\varepsilon p \,d \bm x dt\\[0.125in]
\ds  = - \int_0^T \int_\Omega  \nabla \delta_\varepsilon p \cdot \big(K(\bm x) (\hat{\bm E} (\bm x, \bm v) \otimes \nabla c) \nabla \psi_c  \big) d \bm x dt\\[0.15in]
\ds= - \int_0^T \int_\Omega \big[K(\bm x) (\bm E (\bm x, \bm v) \otimes \nabla \delta_\varepsilon p) \nabla c \big]\cdot \nabla \psi_c  d \bm x dt\\[0.15in]
\ds =\int_0^T \int_\Omega \nabla \cdot \big(K(\bm x) (\bm E (\bm x, \bm v) \otimes \nabla \delta_\varepsilon p ) \nabla c \big)\psi_c d \bm x dt,
\end{array}
\end{equation}
where the first equality utilizes  $ \big[ (\hat{\bm E} (\bm x, \bm v) \otimes \nabla c ) \nabla \psi_c\big] \cdot \bm n(\bm x) =0$ on $\p \om \times [0,T]$ in  Lemma \ref{Lem3},   the second equality applies Lemma \ref{Lem2} with $p$ replaced by $\delta_\varepsilon p$, and the last equality utilizes the relation $\big[(\bm E (\bm x, \bm v) \otimes \nabla \delta_\varepsilon p) \nabla c \big] \cdot \bm n(\bm x) =0$  on $ \p \om \times [0,T]$ from   Lemma \ref{Lem1} since $\delta_\varepsilon p$ in \eqref{Jv:e2}   has the same boundary condition as $p$ in \eqref{Model}.

Lemma \ref{Lem:D} combined with $(\bm D \nabla \psi_c ) \cdot \bm n(\bm x)=0$ on $ \p \om \times [0,T]$ in \eqref{AdjEq} yields that $\nabla \psi_c \cdot \bm n(\bm x)=0$ on $ \p \om \times [0,T]$. We employ this with \eqref{Jv:e2} and \eqref{Int:E}   to evaluate the second term on the right-hand side of \eqref{Jv:e4}
\begin{equation}\label{Jv:e5b}\begin{array}{rl}
& \hspace{-0.175in} \ds   \int_0^T \int_\Omega \mathcal L^*_{p}(\psi_p, c, \psi_c, \bm v)  \delta_\varepsilon p \, d\bm x dt = \int_0^T \int_\Omega  \Big(\nabla \cdot \big(K(\bm x) (\hat{\bm E} (\bm x, \bm v)  \otimes \nabla c)  \nabla \psi_c\big) \\[0.1in]
& \hspace{-0.175in} \qquad  \qquad  \qquad \ds -  \nabla \cdot (K(\bm x) \nabla \psi_p)   -  \nabla \cdot (c  K(\bm x) \nabla \psi_c)  \Big ) \delta_\varepsilon p \,  d\bm x dt\\[0.1in]
& \hspace{-0.175in} \ds  = \int_0^T \int_\Omega  \Big(  \nabla \cdot \big(K(\bm x) (\bm E (\bm x, \bm v)\otimes  \nabla \delta_\varepsilon p) \nabla c \big )  -  \nabla \cdot \big(c  K(\bm x) \nabla \delta_\varepsilon p\big) \Big)\psi_c \,  d\bm x dt\\[0.1in]
& \hspace{-0.175in} \ds \qquad +\int_0^T \int_\Omega  -\nabla \cdot \big(K(\bm x) \nabla \delta_\varepsilon p\big) \psi_p \,  d\bm x dt\\[0.1in]
& \hspace{-0.15in} \ds = \int_0^T \int_\Omega   \Big(  \nabla \cdot \big(K(\bm x) (\bm E (\bm x, \bm v) \otimes \nabla \delta_\varepsilon p) \nabla c \big)  -  \nabla \cdot (c  K(\bm x) \nabla \delta_\varepsilon p) \Big) \psi_c \,  d\bm x dt\\[0.1in]
& \hspace{-0.15in} \ds \qquad + \int_0^T \int_\Omega \ds \mathcal L_p(\delta_\varepsilon p)\psi_p  \,  d\bm x dt.
\end{array}\end{equation}
We then incorporate the following equality resulting from \eqref{Jv:e1}
$$
\begin{array}{l}
 \mathcal L_c(\delta_\varepsilon c, p, \bm v) = - \nabla \cdot (K(\bm x)(\bm E(\bm x, \bm \zeta) \otimes \nabla \delta_\varepsilon p)  \nabla  c(h+\varepsilon \delta h) \big)\\[0.125in]
 \ds\qquad \qquad \qquad \qquad \qquad  + \nabla \cdot \big(c(h+\varepsilon \delta h) K(\bm x) \nabla \delta_\varepsilon p\big)+ \delta h
 \end{array}
$$
with  \eqref{Jv:e2}, \eqref{Jv:e5a}  as well as \eqref{Jv:e5b} to rewrite the first two terms on the right-hand side terms of \eqref{Jv:e4} as follows
\begin{equation*}\label{Jv:e5c}\begin{array}{ll}
\hspace{-0.2in} \ds \int_0^T \int_\Omega \Big( \mathcal L_{c}(\delta_\varepsilon c, p, \bm v) \!+\! \nabla \cdot \big(K(\bm x)  (\bm E (\bm x, \bm v) \otimes \delta_\varepsilon p ) \nabla c\big)  \!- \!\nabla \cdot (c  K(\bm x) \nabla \delta_\varepsilon p)  \Big) \psi_c \,d\bm x dt\\[0.1in]
\ds \qquad + \int_0^T \int_\Omega \mathcal L_p(\delta_\varepsilon p)\psi_p \,d\bm x dt
   =\int_0^T \int_\Omega \delta h( \psi_c +  \psi_p )\,d \bm x dt  \\[0.15in]
  \ds +\int_0^T \int_\Omega  \Big[\nabla \cdot \big(K(\bm x) ( \bm E (\bm x, \bm v) \otimes \delta_\varepsilon p)  \nabla c\big)     \\[0.15in]
   \ds \qquad \qquad \quad \ds -\nabla \cdot \big (K(\bm x) (\bm E(\bm x, \bm \zeta) \otimes\nabla \delta_\varepsilon p) \nabla  c(h+\varepsilon \delta h)\big)\Big] \psi_c\, d\bm x dt \\[0.1in]
 \ds +\int_0^T \int_\Omega  \Big[\nabla \cdot \big (c(h+\varepsilon \delta h) K(\bm x) \nabla \delta_\varepsilon p\big) - \nabla \cdot \big(c  K(\bm x) \nabla \delta_\varepsilon p\big) \Big] \psi_c \, d\bm x dt,
\end{array}\end{equation*}
 the right-hand side of which  converges to $\int_0^T \int_\Omega  \delta h \big( \psi_c +  \psi_p \big)\, d\bm x dt$  as $\varepsilon \rightarrow 0$. Thus we combine the above relation with \eqref{Jv:e4} to conclude that
\begin{equation}\label{Jv:e6}\begin{array}{rl}
 \ds \int_0^T \int_\Omega \delta h \big( \psi_c + \psi_p+ \gamma h \big)  d\bm x dt = \ds \int_0^T \int_\Omega  \big(\psi_c + \psi_p+ \gamma h \big)(v-h)  d\bm x dt =  0
\end{array}\end{equation}
for $ v \in U_{ad}$, which completes the proof.
\end{proof}

\section{Complex boundary conditions in 3D}\label{Sect:Lem3d}
We  prove complex boundary conditions for the three-dimensional case, which requires more technical derivations.

\begin{lemma}\label{Lem3d:D}
The equivalent relation \eqref{LemD:eq1} in Lemma \ref{Lem:D} is valid for $\bm D$ defined in  \eqref{D} with $d =3$.
\end{lemma}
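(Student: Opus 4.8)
The plan is to reproduce the argument of Lemma~\ref{Lem:D} while noting that its single dimension-specific ingredient, the planar parametrization $\bm n=(\cos\theta,\sin\theta)$ used in \eqref{LemD:Dg:e1}, is inessential. The decisive observation is that the anisotropic part of the Bear--Scheidegger tensor \eqref{D} is the rank-one matrix $(v_iv_j)_{i,j=1}^d=\bm v\bm v^\top$, whose contribution to the normal flux factors through $\bm v\cdot\bm n$ in every dimension.

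Concretely, for $d=3$ I would first split \eqref{D} exactly as in \eqref{LemD:Dg},
\begin{equation*}
(\bm D\nabla g)\cdot\bm n(\bm x)=\big(\phi+d_t|\bm v|\big)\,\nabla g\cdot\bm n(\bm x)+\frac{d_l-d_t}{|\bm v|}\big((v_iv_j)_{i,j=1}^3\,\nabla g\big)\cdot\bm n(\bm x),
\end{equation*}
and then evaluate the last term on $\p\om\times[0,T]$. The key is the identity
\begin{equation*}
\big((v_iv_j)_{i,j=1}^3\,\nabla g\big)\cdot\bm n(\bm x)=(\bm v\cdot\nabla g)\,(\bm v\cdot\bm n(\bm x)),
\end{equation*}
which holds because the $i$-th coordinate of $(v_iv_j)\nabla g$ equals $v_i(\bm v\cdot\nabla g)$, so that dotting with $\bm n$ produces the common factor $\bm v\cdot\bm n$. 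This is precisely the three-dimensional counterpart of \eqref{LemD:Dg:e1}, the difference being only that the vanishing is now read off from $\bm v\cdot\bm n=0$ directly rather than through the scalar relation \eqref{bd:v}; in particular no coordinatewise expansion in $\theta$ is required.

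Invoking the hypothesis $\bm v\cdot\bm n=0$ on $\p\om\times[0,T]$ then kills the anisotropic term, leaving $(\bm D\nabla g)\cdot\bm n=\big(\phi+d_t|\bm v|\big)\nabla g\cdot\bm n$ on the boundary. Since $\phi+d_t|\bm v|>\phi_*>0$ by \emph{assumption (b)}, this quantity vanishes if and only if $\nabla g\cdot\bm n=0$, which is the equivalence \eqref{LemD:eq1} for $d=3$. I do not anticipate any genuine obstacle in this lemma: the argument is dimension-free, so the $d=3$ case is no harder than $d=2$. The ``more technical'' three-dimensional treatment promised for this section will instead be needed in the forthcoming analogues of Lemmas~\ref{Lem1}--\ref{Lem3}, where the explicit entries of $\bm E$ and $\hat{\bm E}$, together with the attendant $Q_i$/$\hat Q_i$ identities, become markedly more involved with three velocity components and a general unit normal $\bm n=(n_1,n_2,n_3)$.
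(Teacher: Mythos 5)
Your proposal is correct and follows essentially the same route as the paper: the paper's proof of this lemma likewise splits off the isotropic part $(\phi+d_t|\bm v|)\nabla g\cdot\bm n$ and shows that the anisotropic term vanishes by factoring out $\bm v\cdot\bm n=0$, the only difference being that it writes $\bm n=(\cos\theta\sin\varphi,\sin\theta\sin\varphi,\cos\varphi)$ and expands $\big((v_iv_j)_{i,j=1}^3\nabla g\big)\cdot\bm n$ coordinatewise into $\sum_i \p_{x_i}g\, v_i(\bm v\cdot\bm n)$ rather than invoking the rank-one identity $(\bm v\bm v^\top\nabla g)\cdot\bm n=(\bm v\cdot\nabla g)(\bm v\cdot\bm n)$ abstractly. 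Your coordinate-free formulation is a cleaner way of stating the same cancellation, and your closing remark is accurate: the genuinely technical three-dimensional work in this section lies in the analogues of Lemmas~\ref{Lem1}--\ref{Lem3}, not here.
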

\begin{proof}
Recall the  relation \eqref{LemD:eq1}, we first prove that $(\bm D \nabla g) \cdot \bm n(\bm x) =0 $ on $ \p \om \times [0,T]$  implies $\nabla g \cdot \bm n(\bm x) =0 $ on $ \p \om \times [0,T]$ for some scalar function $g$. By \eqref{D}, we have
\begin{equation}\begin{array}{cl}\label{LemD3d:Dg}
\hspace{-0.15in}\ds (\bm D \nabla g) \cdot \bm n(\bm x) = \big(\phi  + d_t |\bm v|\big) \nabla g \cdot \bm n(\bm x) +  \f{(d_l-d_t)}{|\bm v| } \big((v_i v_j)_{i,j=1}^3 \nabla g \big)\cdot \bm n(\bm x).
\end{array}
\end{equation}
By the spherical polar coordinates, the outward unit normal vector $\bm n$ could be expressed as $\bm n := (\cos\theta\sin\varphi, \sin\theta\sin\varphi, \cos\varphi  )$ for some $\theta$  and $\varphi$  depending on $\bm x \in \p \om$. We incorporate this to reformulate
\begin{equation}\begin{array}{cl}\label{LemD3d:Dg:e1}
\hspace{-0.15in}\ds \big((v_i v_j)_{i,j=1}^3 \nabla g \big)\cdot \bm n(\bm x) & \ds \hspace{-0.1in}=
\begin{bmatrix}
  v_1^2 \p_x g + v_1 v_2 \p_y g +  v_1 v_3 \p_z g  \\[0.1in]
  v_1 v_2 \p_x g + v_2^2 \p_y g +  v_2 v_3 \p_z g \\[0.1in]
  v_1 v_3 \p_x g + v_2 v_3 \p_y g +   v_3^2 \p_z g
\end{bmatrix} \cdot \bm n(\bm x)\\[0.4in]
 & \ds \hspace{-0.1in}=\p_x g v_1 \big(v_1 \cos\theta\sin\varphi + v_2 \sin\theta \sin\varphi + v_3 \cos \varphi\big) \\[0.05in]
  & \ds \hspace{-0.1in} \quad  + \p_y g v_2 \big(v_1 \cos\theta\sin\varphi + v_2 \sin\theta \sin\varphi + v_3 \cos \varphi\big)\\[0.05in]
  & \ds \hspace{-0.1in} \quad  + \p_z g v_3 \big(v_1 \cos\theta\sin\varphi + v_2 \sin\theta \sin\varphi + v_3 \cos \varphi\big)=0,
\end{array}
\end{equation}
where we have used the fact that
\begin{equation}\label{bd3d:v}\begin{array}{l}
v_1 \cos\theta\sin\varphi +   v_2 \sin\theta\sin\varphi + v_3 \cos \varphi=0 ~~\mbox{on}  ~~ \p \om \times [0,T]
\end{array}
\end{equation}
 derived from $\bm v \cdot \bm n(\bm x)=0$ on $ \p \om \times [0,T]$  in \eqref{VtFDEs}. By \eqref{LemD3d:Dg}--\eqref{LemD3d:Dg:e1}, $ (\bm D \nabla g) \cdot \bm n(\bm x) =0$ on $ \p \om \times [0,T]$  implies $\big(\phi  + d_t |\bm v|\big) \nabla g \cdot \bm n(\bm x)$ =0 on $ \p \om \times [0,T]$, which further gives  $\nabla g \cdot \bm n(\bm x) =0 $ on $ \p \om \times [0,T]$  since $\phi  + d_t |\bm v|  \ge \phi > \phi_* >0$ by {\it assumption (b)}.

Conversely, if $\nabla g \cdot \bm n(\bm x) =0 $ on $ \p \om \times [0,T]$, \eqref{LemD3d:Dg} combined with \eqref{LemD3d:Dg:e1} gives $(\bm D \nabla g) \cdot \bm n(\bm x)=0$ on $ \p \om \times [0,T]$, and we thus complete the proof.
\end{proof}

\begin{lemma}\label{Lem3d1}
Under the boundary conditions \eqref{Lem1:bd} in Lemma \ref{Lem1}, we arrive at  $\ds \big[ (\bm E (\bm x, \bm v) \otimes \nabla p) \nabla c\big] \cdot \bm n(\bm x) =0$  \mbox{on} $\p \om \times [0,T]$, where we similarly denote the  3-by-3 block matrix  by $\bm E :=\big(\p_{\bm v}D_{i,j}\big)_{i,j=1}^3$  for the sake of simplicity with its entries given by
\begin{equation}\begin{array}{cl}\label{DhatE3d}
 \hspace{-0.2in}\ds \p_{\bm v} D_{1,1} &\hspace{-0.1in}\ds  : = \p_{\bm v}\big( dt |\bm v| +  (d_l-d_t){v_1^2}/{|\bm v|}\big)=  \bm E_d + \bm E_{l,1}, \\[0.125in]
\hspace{-0.2in} \ds \p_{\bm v}  D_{1,2}  &\hspace{-0.1in}\ds : = (d_l-d_t) \p_{\bm v}\big(  {v_1 v_2}/{|\bm v|}\big) \!= \! \f{(d_l-d_t)}{{|\bm v|^3}} \big({v_2(v_2^2 + v_3^2)}, {v_1(v_1^2 + v_3^2)}, {-v_1v_2v_3}\big),\\[0.125in]
\hspace{-0.2in} \ds \p_{\bm v}  D_{1,3}  &\hspace{-0.1in}\ds : = (d_l-d_t) \p_{\bm v}\big(  {v_1 v_3}/{|\bm v|}\big) \!= \! \f{(d_l-d_t)}{{|\bm v|^3}}  \big({v_3(v_2^2 + v_3^2)}, {-v_1v_2 v_3}, {v_1(v_1^2 + v_2^2)}\big),\\[0.125in]
\hspace{-0.2in} \ds \p_{\bm v}  D_{2,2} &\hspace{-0.1in}\ds  : = \p_{\bm v}\big( dt |\bm v| +  (d_l-d_t){v_2^2}/{|\bm v|}\big) =   \bm E_d +\bm E_{l,2}, \\[0.125in]
\hspace{-0.2in} \ds \p_{\bm v}  D_{2,3}  &\hspace{-0.1in}\ds : = (d_l-d_t) \p_{\bm v}\big(  {v_2 v_3}/{|\bm v|}\big) \!= \! \f{(d_l-d_t)}{{|\bm v|^3}}  \big({-v_1v_2 v_3 }, {v_3(v_1^2 + v_3^2)}, {v_2(v_1^2 + v_2^2)}\big),\\[0.125in]
\hspace{-0.2in} \ds \p_{\bm v}  D_{3,3} &\hspace{-0.1in}\ds  : = \p_{\bm v}\big( dt |\bm v| +  (d_l-d_t){v_3^2}/{|\bm v|}\big) =   \bm E_d + \bm E_{l,3},\\[0.125in]
\hspace{-0.2in} \ds  \p_{\bm v} D_{2,1} & \hspace{-0.1in}\ds =  \p_{\bm v}  D_{1,2}, \quad \p_{\bm v} D_{3,1}=  \p_{\bm v}  D_{1,3}, \quad  \p_{\bm v} D_{3,2}=  \p_{\bm v}  D_{2,3},
\end{array}
\end{equation}
and $\otimes$ is accordingly defined  as   a Kronecker type product of a 3-by-3 block matrix and a column vector in $\mathbb R^{3\times 1}$. Here $\bm E_d$, $\bm E_{l,1}$, $\bm E_{l,2}$ and $\bm E_{l,3}$ are defined as follows
\begin{equation}\begin{array}{cl}\label{DhatE3d1}
\ds \bm E_d &\hspace{-0.1in}\ds  :=    \p_{\bm v} \big(d_t |\bm v| \big)  =   \f{dt}{|\bm v|} \big({v_1}, {v_2}, {v_3}\big), \\[0.1in]
\bm E_{l,1} &\hspace{-0.1in}\ds  := (d_l-d_t) \p_{\bm v}\big(  {v_1^2}/{|\bm v|}\big) = \f{(d_l-d_t)}{{|\bm v|^3}}  \big({v_1(v_1^2 + 2 v_2^2 + 2 v_3^2)}, {-v_1^2 v_2}, {-v_1^2 v_3}\big),\\[0.15in]
\bm E_{l,2} &\hspace{-0.1in}\ds  := (d_l-d_t) \p_{\bm v}\big(  {v_2^2}/{|\bm v|}\big) = \f{(d_l-d_t)}{{|\bm v|^3}}  \big({-v_1 v_2^2 }, {v_2(2 v_1^2 +  v_2^2 + 2 v_3^2)}, {- v_2^2 v_3 }\big),\\[0.15in]
\bm E_{l,3} &\hspace{-0.1in}\ds  := (d_l-d_t) \p_{\bm v}\big(  {v_3^2}/{|\bm v|}\big) = \f{(d_l-d_t)}{{|\bm v|^3}}  \big({-v_1 v_3^2 }, {- v_2 v_3^2 }, {v_3(2 v_1^2 +  2 v_2^2 +  v_3^2)}\big).
\end{array}
\end{equation}
\end{lemma}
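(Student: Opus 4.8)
The plan is to mirror the two-dimensional argument of Lemma \ref{Lem1}, splitting $\bm E = \bm E^t + \bm E^l$ into a transverse block-diagonal part built from $\bm E_d$ and a longitudinal part $\bm E^l$ that collects the $\bm E_{l,i}$ together with the off-diagonal blocks $\partial_{\bm v}D_{i,j}$ ($i\neq j$), exactly along the lines of \eqref{DhatE3d}--\eqref{DhatE3d1}. Since $(\bm D\nabla c)\cdot\bm n(\bm x)=0$ on $\p\om\times[0,T]$, Lemma \ref{Lem3d:D} first yields $\nabla c\cdot\bm n(\bm x)=0$ there, and I would carry through the computation using this together with $\bm v\cdot\bm n(\bm x)=0$ and $\nabla p\cdot\bm n(\bm x)=0$ from the hypothesis \eqref{Lem1:bd}.

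For the transverse part the conclusion is immediate: since $\bm E^t=\mathrm{diag}(\bm E_d,\bm E_d,\bm E_d)$, the Kronecker contraction gives $\bm E^t\otimes\nabla p=(\bm E_d\cdot\nabla p)\,\bm I$, a scalar multiple of the identity, so that
\[
\big[(\bm E^t\otimes\nabla p)\nabla c\big]\cdot\bm n(\bm x)=(\bm E_d\cdot\nabla p)\,\big(\nabla c\cdot\bm n(\bm x)\big)=0 .
\]
This step uses only $\nabla c\cdot\bm n(\bm x)=0$ and requires no dimension-specific work.

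The substance lies in the longitudinal part, and here I would avoid the component-by-component evaluation of the three-dimensional analogues of $Q_1,Q_2$ used in the planar case. Instead I would exploit the closed form $\partial_{v_m}\!\big(v_iv_j/|\bm v|\big)=(\delta_{im}v_j+\delta_{jm}v_i)/|\bm v|-v_iv_jv_m/|\bm v|^3$, which, writing $\nabla p=(p_m)$, collapses the $(i,j)$ entry of $\bm E^l\otimes\nabla p$ into
\[
(\partial_{\bm v}D^l_{i,j})\cdot\nabla p=(d_l-d_t)\Big[\tfrac{v_jp_i+v_ip_j}{|\bm v|}-\tfrac{v_iv_j(\bm v\cdot\nabla p)}{|\bm v|^3}\Big].
\]
Contracting this against $\nabla c$ and then against $\bm n$, every summand folds into one of the inner products $\bm v\cdot\nabla c$, $\nabla p\cdot\nabla c$, $\bm v\cdot\bm n(\bm x)$, or $\nabla p\cdot\bm n(\bm x)$, producing
\[
\big[(\bm E^l\otimes\nabla p)\nabla c\big]\cdot\bm n(\bm x)=(d_l-d_t)\Big[\tfrac{(\nabla p\cdot\bm n)(\bm v\cdot\nabla c)+(\bm v\cdot\bm n)(\nabla p\cdot\nabla c)}{|\bm v|}-\tfrac{(\bm v\cdot\bm n)(\bm v\cdot\nabla p)(\bm v\cdot\nabla c)}{|\bm v|^3}\Big].
\]
Each term now carries a factor $\bm v\cdot\bm n(\bm x)=0$ or $\nabla p\cdot\bm n(\bm x)=0$, so the longitudinal contribution vanishes on $\p\om\times[0,T]$, and adding the transverse part completes the proof.

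The main obstacle is purely organizational: confirming that the longitudinal contraction factors cleanly through $\bm v\cdot\bm n(\bm x)$ and $\nabla p\cdot\bm n(\bm x)$ with no residual terms. The index manipulation above makes this transparent and, in fact, dimension-independent, which is why I prefer it to propagating the spherical-coordinate relation \eqref{bd3d:v} through each of the six distinct blocks in \eqref{DhatE3d}. The only point to keep in mind is that the $1/|\bm v|$ factors presuppose $\bm v\neq 0$, which is the standing hypothesis wherever the Bear--Scheidegger tensor is differentiated (cf.\ \emph{assumption} (a)).
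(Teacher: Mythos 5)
Your proposal is correct, and it reaches the conclusion by a genuinely different route from the paper. The paper also splits $\bm E=\bm E^1+\bm E^2$ with $\bm E^1=\mathrm{diag}(\bm E_d,\bm E_d,\bm E_d)$ and handles the transverse part exactly as you do, but for the longitudinal part it parametrizes $\bm n=(\cos\theta\sin\varphi,\sin\theta\sin\varphi,\cos\varphi)$ and grinds through three scalar quantities $H_1,H_2,H_3$ (the 3D analogues of $Q_1,Q_2$), repeatedly substituting the relation \eqref{bd3d:v} to cancel terms block by block until each $H_k$ collapses to $v_k|\bm v|^2\,\nabla p\cdot\bm n=0$. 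You instead use the coordinate-free identity $\p_{v_m}(v_iv_j/|\bm v|)=(\delta_{im}v_j+\delta_{jm}v_i)/|\bm v|-v_iv_jv_m/|\bm v|^3$, which is valid for all $i,j$ (including $i=j$, recovering \eqref{DhatE3d1}), so the entire longitudinal contraction factors through the inner products $\bm v\cdot\bm n$ and $\nabla p\cdot\bm n$ in one line; I checked the resulting expression
\begin{equation*}
(d_l-d_t)\Big[\tfrac{(\nabla p\cdot\bm n)(\bm v\cdot\nabla c)+(\bm v\cdot\bm n)(\nabla p\cdot\nabla c)}{|\bm v|}-\tfrac{(\bm v\cdot\bm n)(\bm v\cdot\nabla p)(\bm v\cdot\nabla c)}{|\bm v|^3}\Big]
\end{equation*}
and it is exactly the sum over $i$ of $n_i$ times the $i$-th component of $(\bm E^2\otimes\nabla p)\nabla c$, so the vanishing follows from the first two boundary conditions in \eqref{Lem1:bd} alone (with $\nabla c\cdot\bm n=0$, via Lemma \ref{Lem3d:D}, needed only for the transverse block). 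Your argument is shorter, makes the structural reason for the cancellation transparent, and is dimension-independent, so it also subsumes Lemma \ref{Lem1}; what the paper's explicit spherical-coordinate computation buys is entry-level verifiability and consistency with the style of the harder Lemma \ref{Lem3d3}, where such a clean factorization is not available and the angle relations must be propagated by hand. Your closing caveat about $\bm v\neq 0$ applies equally to the paper's formulation of $\bm D$ and its derivatives, so it is not a defect of your proof.
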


\begin{proof}
We incorporate \eqref{DhatE3d} to split $\bm E =  \bm E^1 +  \bm E^2 $ as the sum of two 3-by-3  block  matrices with $\bm E^1: = \diag(\bm E_d, \bm E_d, \bm E_d)$ and
\begin{equation}\begin{array}{cl}\label{Lem3d:Matrix}
\bm E^2 = \big(\bm E^2_{i,j}\big)_{i,j=1}^3: =
\begin{bmatrix}
  \bm E_{l,1} & \p_{\bm v}  D_{1,2}   & \p_{\bm v}  D_{1,3} \\[0.05in]
  \p_{\bm v}  D_{2,1} & \bm E_{l,2} &   \p_{\bm v}  D_{2,3}\\[0.05in]
  \p_{\bm v}  D_{3,1} & \p_{\bm v}  D_{3,2} &  \bm E_{l,3}
\end{bmatrix}
\end{array}.
\end{equation}
We first show that $\big[ (\bm E^1 \otimes \nabla p) \nabla c\big ]\cdot \bm n(\bm x) =0$ on $\p \om \times [0,T]$. By Lemma \ref{Lem3d:D}, $(\bm D \nabla c) \cdot \bm n(\bm x) =0 $ on $ \p \om \times [0,T]$  implies that $\nabla c \cdot \bm n(\bm x) =0$ on $ \p \om \times [0,T]$. By \eqref{Def:kron} and \eqref{Lem3d:Matrix}, we have
\begin{equation}\begin{array}{cl}\label{Lem3d:Et}
\ds  \big[ (\bm E^1 \otimes  \nabla p) \nabla c\big ]\cdot \bm n(\bm x)\ds  & \ds \hspace{-0.1in}= \big[\diag(\bm E_d \nabla p, \bm E_d\nabla p, \bm E_d\nabla p)  \nabla c\big] \cdot \bm n(\bm x) \\[0.1in]
& \ds \hspace{-0.1in} \ds = \big(\bm E_d   \nabla p\big) \nabla c \cdot \bm n(\bm x) = 0
\end{array}
\end{equation}
on $ \p \om \times [0,T]$.
By \eqref{Lem3d:Matrix}, we  evaluate $\big[(\bm E^2 \otimes\nabla p) \nabla c\big] \cdot \bm n(\bm x) $ on $ \p \om \times [0,T]$ as follows
\begin{equation}\begin{array}{cl}\label{Lem3d:El}
 & \hspace{-0.2in}\ds \ds  \big[(\bm E^2 \otimes \nabla p) \nabla c\big] \cdot \bm n(\bm x)   =
\begin{bmatrix}
\bm E_{l,1}  \nabla p  &\p_{\bm v}  D_{1,2} \nabla p  & \p_{\bm v}  D_{1,3} \nabla p  \\[0.05in]
\p_{\bm v} D_{2,1} \nabla p  & \bm E_{l,2} \nabla p & \p_{\bm v}  D_{2,3} \nabla p \\[0.05in]
\p_{\bm v} D_{3,1} \nabla p  & \p_{\bm v} D_{3,2} \nabla p & \bm E_{l,3} \nabla p \\[0.05in]
\end{bmatrix}
\nabla c \cdot \bm n(\bm x)\\[0.3in]
& \hspace{-0.1in}\ds
=
 \begin{bmatrix}
\big(\bm E_{l,1}  \nabla p\big)\p_x c  + \big(\p_{\bm v}  D_{1,2}  \nabla p\big)\p_y c + \big(\p_{\bm v}  D_{1,3} \nabla p\big) \p_z c  \\[0.1in]
\big(\p_{\bm v}  D_{2,1}   \nabla p\big)\p_x c  + \big( \bm E_{l,2}  \nabla p\big)\p_y c + \big(\p_{\bm v}  D_{2,3} \nabla p \big) \p_z c\\[0.1in]
\big(\p_{\bm v} D_{3,1} \nabla p\big)\p_x c  + \big(\p_{\bm v} D_{3,2} \nabla p\big) \p_y c + \big(\bm E_{l,3} \nabla p\big) \p_z c
\end{bmatrix} \cdot \bm n(\bm x),
\end{array}
\end{equation}
which could be reformulated by
\begin{equation}\begin{array}{l}\label{Lem3d:El2}
 \hspace{-0.15in}\ds \ds  \big[(\bm E^2 \otimes \nabla p) \nabla c\big] \cdot \bm n(\bm x) \\[0.1in]
\ds = \p_x c \Big[ \cos\theta \sin\varphi\big(\bm E_{l,1} \nabla p\big) + \sin\theta\sin\varphi  \big(\p_{\bm v}  D_{2,1}   \nabla p\big) + \cos \varphi \big(\p_{\bm v} D_{3,1} \nabla p\big) \Big] \\[0.1in]
\ds  \quad + \p_y c \Big[ \cos\theta \sin\varphi \big(\p_{\bm v}  D_{1,2}  \nabla p\big) + \sin\theta\sin\varphi  \big(\bm E_{l,2}  \nabla p\big)+ \cos \varphi \big(\p_{\bm v} D_{3,2} \nabla p\big)\Big] \\[0.1in]
 \ds  \quad + \p_z c \Big[ \cos\theta \sin\varphi \big(\p_{\bm v}  D_{1,3} \nabla p\big) + \sin\theta\sin\varphi  \big(\p_{\bm v}  D_{2,3} \nabla p \big)  + \cos \varphi \big(\bm E_{l,3} \nabla p\big)\Big] \\[0.1in]
\ds = : \f{d_l-d_t}{|\bm v|^3}\big( H_1 \p_x c  + H_2 \p_y c +  H_3 \p_z c\big).
\end{array}
\end{equation}
By  \eqref{DhatE3d}--\eqref{DhatE3d1} and \eqref{Lem3d:El2}, we have
$$
\begin{array}{cl}
\hspace{-0.15in}\ds H_1
& \hspace{-0.125in} \ds = (v_1 \cos\theta \sin \varphi ) \big[ (v_1^2 + 2v_2^2 + 2v_3^2 ) \p_x p -v_1 v_2 \p_y p -v_1 v_3 \p_z p \big] \\[0.1in]
\hspace{-0.15in}& \hspace{-0.125in} \ds  + \sin\theta\sin \varphi  \big[v_2 (v_2^2 + v_3^2)\p_x p + v_1(v_1^2 + v_3^2) \p_y p - v_1 v_2 v_3 \p_z p \big]\\[0.1in]
\hspace{-0.15in}& \hspace{-0.125in} \ds  + \cos \varphi  \big[v_3 (v_2^2 + v_3^2)\p_x p - v_1 v_2 v_3 \p_y p + v_1 (v_1 ^2 + v_2^2 )\p_z p \big],
\end{array}
$$
which   could be further simplified  as follows
\begin{equation}\begin{array}{cl}\label{Lem3d:Q1}
\hspace{-0.15in}\ds H_1
& \hspace{-0.125in} \ds \overset{\mbox{(i)}}= -(v_2 \sin\theta \sin \varphi ) \big[ (v_1^2 + 2v_2^2 + 2v_3^2 ) \p_x p -v_1 v_2 \p_y p -v_1 v_3 \p_z p \big] \\[0.1in]
& \hspace{-0.125in} \qquad \ds  -(v_3 \cos \varphi ) \big[ (v_1^2 + 2v_2^2 + 2v_3^2 ) \p_x p -v_1 v_2 \p_y p -v_1 v_3 \p_z p \big] \\[0.1in]
\hspace{-0.15in}& \hspace{-0.125in} \qquad \ds \ds  + \sin\theta\sin \varphi  \big[v_2 (v_2^2 + v_3^2)\p_x p + v_1(v_1^2 + v_3^2) \p_y p - v_1 v_2 v_3 \p_z p \big]\\[0.1in]
\hspace{-0.15in}& \hspace{-0.125in}\qquad \ds  \ds  + \cos \varphi  \big[v_3 (v_2^2 + v_3^2)\p_x p - v_1 v_2 v_3 \p_y p + v_1 (v_1 ^2 + v_2^2 )\p_z p \big]\\[0.1in]
& \hspace{-0.125in} \ds =  \sin\theta \sin \varphi \big[ -v_2 |\bm v|^2 \p_x p + v_1 |\bm v|^2 \p_y p \big] + \cos\varphi\big[-v_3 |\bm v|^2 \p_x p + v_1 |\bm v|^2 \p_z p \big]\\[0.01in]
& \hspace{-0.125in} \ds  \overset{\mbox{(ii)}} =  \big(v_1 \cos\theta \sin \varphi + v_3 \cos \varphi\big)  |\bm v|^2 \p_x p + \sin\theta \sin \varphi  v_1 |\bm v|^2 \p_y p   \\[0.1in]
& \hspace{-0.125in} \ds \qquad  + \cos\varphi\big[-v_3 |\bm v|^2 \p_x p + v_1 |\bm v|^2 \p_z p \big] \\[0.1in]
& \hspace{-0.125in} \ds = v_1 |\bm v|^2 \big[\p_x p \cos\theta\sin\varphi +   \p_y p \sin\theta\sin\varphi + \p_z p \cos \varphi\big] =0,
\end{array}
\end{equation}
where   (i) and (ii) are by the boundary   condition \eqref{bd3d:v} and the last equality employs $\nabla p \cdot \bm n(\bm x) =0$ on $ \p \om \times [0,T]$ in \eqref{Model}.
Similarly, we evaluate $H_2$ in \eqref{Lem3d:El2} by  \eqref{DhatE3d}--\eqref{DhatE3d1}, and   utilize the boundary   condition \eqref{bd3d:v} to derive  (iii) and (iv) to find
\begin{equation}\begin{array}{cl}\label{Lem3d:Q2}
\hspace{-0.15in} \ds H_2 & \hspace{-0.125in} \ds =  \cos\theta \sin \varphi  \big[ v_2(v_2^2 + v_3^2 ) \p_x p + v_1 (v_1^2 + v_3^2 ) \p_y p -v_1 v_2 v_3 \p_z p \big] \\[0.1in]
\hspace{-0.15in}& \hspace{-0.125in} \ds \qquad \ds  + ( v_2 \sin\theta\sin \varphi)  \big[- v_1 v_2 \p_x p + (2 v_1^2 + v_2^2 +  2 v_3^2) \p_y p - v_2 v_3 \p_z p \big]\\[0.1in]
\hspace{-0.15in}& \hspace{-0.125in} \ds \qquad \ds  + \cos \varphi  \big[- v_1 v_2 v_3  \p_x p + v_3 (v_1^2 + v_3^2) \p_y p + v_2 (v_1 ^2 + v_2^2 )\p_z p \big]\\[0.01in]
\hspace{-0.15in}& \hspace{-0.125in} \ds  \overset{\mbox{(iii)}}= \cos\theta \sin \varphi \big[ v_2 |\bm v|^2 \p_x p - v_1 |\bm v|^2 \p_y p\big] + \cos\varphi\big[-v_3 |\bm v|^2 \p_y p + v_2 |\bm v|^2 \p_z p \big] \\[0.01in]
\hspace{-0.15in}& \hspace{-0.125in} \ds  \overset{\mbox{(iv)}}= v_2 |\bm v|^2 \big[\p_x p \cos\theta\sin\varphi +   \p_y p \sin\theta\sin\varphi + \p_z p \cos \varphi\big] =0.
\end{array}
\end{equation}
 We finally incorporate \eqref{DhatE3d}--\eqref{DhatE3d1}  to reformulate $H_3$ in \eqref{Lem3d:El2} by  incorporating the boundary condition   \eqref{bd3d:v} in (v) and (vi) to arrive at
\begin{equation}\begin{array}{cl}\label{Lem3d:Q3}
\hspace{-0.15in} \ds H_3 & \hspace{-0.125in} \ds =  \cos\theta \sin \varphi  \big[ v_3(v_2^2 + v_3^2 ) \p_x p -v_1 v_2 v_3 \p_y p + v_1 (v_1^2 + v_2^2 )  \p_z p \big] \\[0.1in]
\hspace{-0.15in}& \hspace{-0.125in} \ds \qquad  \ds  + \sin\theta\sin \varphi  \big[- v_1 v_2 v_3 \p_x p + v_3 ( v_1^2 + v_3^2) \p_y p + v_2
(v_1^2 + v_2^2 )\p_z p \big]\\[0.1in]
\hspace{-0.15in}& \hspace{-0.125in} \ds \qquad \ds  + (v_3 \cos \varphi)  \big[- v_1  v_3  \p_x p - v_2 v_3  \p_y p +  (2 v_1 ^2 + 2 v_2^2 + v_3^2 )\p_z p \big]\\[0.01in]
\hspace{-0.15in}& \hspace{-0.125in} \ds  \overset{\mbox{(v)}}= \cos\theta \sin \varphi \big[ v_3 |\bm v|^2 \p_x p - v_1 |\bm v|^2 \p_z p\big] + \sin\theta \sin\varphi\big[v_3 |\bm v|^2 \p_y p - v_2 |\bm v|^2 \p_z p \big] \\[0.01in]
\hspace{-0.15in}& \hspace{-0.125in} \ds  \overset{\mbox{(vi)}}= v_3 |\bm v|^2 \big[\p_x p \cos\theta\sin\varphi +   \p_y p \sin\theta\sin\varphi + \p_z p \cos \varphi\big] =0.
\end{array}
\end{equation}

We invoke \eqref{Lem3d:Q1}--\eqref{Lem3d:Q3} to find that $\big[(\bm E^2 \otimes \nabla p) \nabla c\big] \cdot \bm n(\bm x) =0$  on $ \p \om \times [0,T]$, which, combined with  \eqref{Lem3d:Et}, yields that $\big[(\bm E \otimes \nabla p) \nabla c \big] \cdot \bm n(\bm x) =0$  on $ \p \om \times [0,T]$, and we  thus complete the proof.
\end{proof}

We define a 3-by-3 block matrix $\hat{\bm E } =  \big(\hat{\bm E}_{i,j}\big)_{i,j=1}^3$ which exhibits a similar structure as its two-dimensional analogue \eqref{hatE}
{\footnotesize\begin{equation}\begin{array}{cl}\label{hatE3d}
   \hspace{-0.2in}\ds \hat{\bm E} \!:=\!
 \hspace{-0.05in}\begin{bmatrix}
  \big(\p_{v_1} D_{1,1},\p_{v_1} D_{1,2},\p_{v_1} D_{1,3} \big) &\hspace{-0.15in} \big( \p_{ v_1} D_{2,1}, \p_{ v_1} D_{2,2}, \p_{v_1} D_{2,3}\big) &\hspace{-0.15in} \big( \p_{ v_1} D_{3,1}, \p_{ v_1} D_{3,2}, \p_{ v_1} D_{3,3}\big)  \\[0.1in]
  \big(\p_{v_2} D_{1,1},\p_{v_2} D_{1,2},\p_{v_2} D_{1,3} \big) &\hspace{-0.15in} \big( \p_{ v_2} D_{2,1}, \p_{v_2} D_{2,2}, \p_{v_2} D_{2,3}\big) &\hspace{-0.15in} \big( \p_{ v_2} D_{3,1}, \p_{ v_2} D_{3,2},\p_{ v_2} D_{3,3}\big)  \\[0.1in]
    \big(\p_{v_3} D_{1,1},\p_{v_3} D_{1,2},\p_{v_3} D_{1,3} \big) &\hspace{-0.15in} \big( \p_{ v_3} D_{2,1}, \p_{ v_3} D_{2,2}, \p_{v_3} D_{2,3}\big) &\hspace{-0.15in} \big( \p_{ v_3} D_{3,1}, \p_{ v_3} D_{3,2}, \p_{ v_3} D_{3,3}\big)
\end{bmatrix}
\end{array}
\end{equation}}

\begin{lemma}\label{Lem3d2} The 3-by-3 block  matrices $\bm E =\big(\p_{\bm v}D_{i,j}\big)_{i,j=1}^3$  and $\hat {\bm E}$ defined in \eqref{hatE3d} satisfy the relation \eqref{Lem2:eq1} in Lemma \ref{Lem2}
%\begin{equation}\begin{array}{c}\label{Lem3d2:eq1}
%  \ds  \big[K(\bm x) (\bm E (\bm x, \bm v) \otimes  \nabla p )\nabla c\big] \cdot \nabla \psi_c = \nabla  p \cdot \big[K(\bm x) (\hat{\bm E} (\bm x, \bm v)\otimes  \nabla c) \nabla \psi_c \big]
%  \end{array}
%\end{equation}
in $\Omega \times [0,T]$.
\end{lemma}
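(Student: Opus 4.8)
The plan is to follow the purely algebraic route of the two-dimensional Lemma~\ref{Lem2}, since the claimed identity \eqref{Lem2:eq1} holds pointwise in $\Omega\times[0,T]$ and requires neither boundary conditions nor the explicit Bear--Scheidegger form of $\bm D$; only the index bookkeeping changes, as the contracted sums now run over three directions. First I would write both sides as explicit scalar triple sums. Using the Kronecker-type product (the three-dimensional analogue of \eqref{Def:kron}) together with the definition $\bm E=(\p_{\bm v}D_{i,j})_{i,j=1}^3$, the left-hand side becomes
\begin{equation*}
\ds \big[(\bm E\otimes\nabla p)\nabla c\big]\cdot\nabla\psi_c=\sum_{i,j,k=1}^3 (\p_{v_k}D_{i,j})(\p_{x_k}p)(\p_{x_j}c)(\p_{x_i}\psi_c),
\end{equation*}
where $(x_1,x_2,x_3)=(x,y,z)$: the velocity-derivative index $k$ contracts $\p_{\bm v}D_{i,j}$ against $\nabla p$, the column index $j$ contracts against $\nabla c$, and the row index $i$ against $\nabla\psi_c$.

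Next I would regroup this triple sum by factoring out $\p_{x_k}p$, that is, by collecting for each fixed direction $k$ the quantity $\sum_{i,j}(\p_{v_k}D_{i,j})(\p_{x_j}c)(\p_{x_i}\psi_c)$. The matrix $\hat{\bm E}$ in \eqref{hatE3d} is constructed precisely so that its $(k,i)$ block is the vector $(\p_{v_k}D_{i,n})_{n=1}^3$; contracting this block against $\nabla c$ and then contracting the resulting $3\times 3$ matrix against $\nabla\psi_c$ reproduces exactly the regrouped expression, giving
\begin{equation*}
\ds \nabla p\cdot\big[(\hat{\bm E}\otimes\nabla c)\nabla\psi_c\big]=\sum_{k,i,n=1}^3(\p_{v_k}D_{i,n})(\p_{x_k}p)(\p_{x_n}c)(\p_{x_i}\psi_c).
\end{equation*}
Relabelling $n\mapsto j$ shows the two triple sums coincide term by term, so the two sides agree. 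Multiplying through by the scalar $K(\bm x)$ then yields the relation \eqref{Lem2:eq1} in three dimensions.

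The derivation is essentially bookkeeping rather than genuine analysis, and the only point that demands care is matching the index pattern of $\hat{\bm E}$ against that of $\bm E$: in $\bm E\otimes\nabla p$ the $\p_{\bm v}$-index is contracted against $\nabla p$ from within the block structure, whereas in $\hat{\bm E}$ the same $\p_{\bm v}$-index is promoted to the outer row index so that it pairs with $\nabla p$ through the final dot product, while $\nabla c$ is instead contracted with the second $D$-subscript. Verifying that the transposition of roles encoded in definition \eqref{hatE3d} produces exactly the permutation relating $(i,j,k)$ to $(i,n,k)$ above, rather than some other rearrangement, is the crux; once the three-index sums are written out this equality is immediate, and no symmetry of $D_{i,j}$ is required.
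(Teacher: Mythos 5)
Your proposal is correct and takes essentially the same route as the paper: the authors simply state that the proof "follows from that of Lemma~\ref{Lem2}," and that two-dimensional proof is exactly the expand-and-regroup computation you perform, with your index form $\sum_{i,j,k}(\p_{v_k}D_{i,j})(\p_{x_k}p)(\p_{x_j}c)(\p_{x_i}\psi_c)$ being the coordinate-free summary of \eqref{Lem2:E}--\eqref{Lem2:E:e2}. Your identification of the $(k,i)$ block of $\hat{\bm E}$ in \eqref{hatE3d} as $(\p_{v_k}D_{i,n})_{n=1}^3$ is the right reading of the definition, so the relabelling argument closes the proof.
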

\begin{proof}
The proof of this lemma follows from that of Lemma \ref{Lem2} and is thus omitted.
\end{proof}

\begin{lemma}\label{LemLPre3d}
For some scalar function $\psi_c$ and under the boundary conditions
\begin{equation}\label{Lem3dPre:bd}
 \bm v(\bm x, t) \cdot \bm n(\bm x)= (\bm D \nabla \psi_c) \cdot \bm n(\bm x) =0 ~~ \mbox{on}~~  \p \om \times [0,T],
\end{equation}
the following relations hold
\begin{equation}\begin{array}{l}\label{LemPre3d:eq1}
 \cos\theta\sin\varphi\big( (v_2^2 + v_3^2) \p_x \psi_c - v_1 v_2 \p_y \psi_c - v_1 v_3 \p_z \psi_c\big) \\[0.1in]
 \ds \qquad \qquad = (v_2 \cos \varphi -v_3 \sin\theta\sin\varphi)(v_3 \p_y \psi_c-v_2 \p_z \psi_c),\\[0.1in]
\ds    \sin\theta\sin\varphi\big(- v_1 v_2 \p_x\psi_c  + (v_1^2 + v_3^2) \p_y \psi_c - v_2 v_3 \p_z \psi_c\big) \\[0.1in]
 \ds \qquad\qquad = (v_1 \cos\varphi - v_3 \cos\theta\sin\varphi)(v_3 \p_x \psi_c - v_1 \p_z\psi_c),\\[0.1in]
\ds  \cos\varphi\big(- v_1 v_3 \p_x  \psi_c - v_2 v_3 \p_y  \psi_c  + (v_1^2 + v_2^2) \p_z \psi_c\big) \\[0.1in]
 \ds \qquad\qquad =(v_1 \sin\theta \sin\varphi-v_2 \cos\theta\sin\varphi)(v_2 \p_x \psi_c - v_1 \p_y \psi_c)
  \end{array}
\end{equation}
on $\p \Omega \times [0,T]$. Here the angles $\theta$ and $\varphi$ are determined by the outward unit normal vector $\bm n$ as described below \eqref{LemD3d:Dg}.
\end{lemma}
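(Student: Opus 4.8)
The plan is to recognize the three scalar identities in \eqref{LemPre3d:eq1} as the three Cartesian components of a single vector identity on $\p\om\times[0,T]$, and then to exploit a geometric fact that makes that identity transparent. First I would invoke Lemma \ref{Lem3d:D}: the hypothesis $(\bm D\nabla\psi_c)\cdot\bm n=0$ is equivalent to $\nabla\psi_c\cdot\bm n=0$, so that together with $\bm v\cdot\bm n=0$ both $\bm v$ and $\nabla\psi_c$ are orthogonal to $\bm n$ at every boundary point. Writing $\bm n=(\cos\theta\sin\varphi,\sin\theta\sin\varphi,\cos\varphi)$ as in the statement, these two orthogonality relations are precisely \eqref{bd3d:v} and its $\psi_c$-analogue $\p_x\psi_c\cos\theta\sin\varphi+\p_y\psi_c\sin\theta\sin\varphi+\p_z\psi_c\cos\varphi=0$.

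Next I would unpack the vector structure. A direct expansion of the triple product gives $\bm v\times(\nabla\psi_c\times\bm v)=|\bm v|^2\nabla\psi_c-(\bm v\cdot\nabla\psi_c)\bm v$, whose $i$-th component is exactly the bracketed factor multiplying $n_i$ on the left-hand side of the $i$-th identity; for instance the first component is $(v_2^2+v_3^2)\p_x\psi_c-v_1v_2\p_y\psi_c-v_1v_3\p_z\psi_c$. Likewise the two factors on each right-hand side are the $i$-th components of $\bm v\times\bm n$ and of $\nabla\psi_c\times\bm v$. Thus, writing $\bm u:=\nabla\psi_c\times\bm v$, all three identities amount to the single componentwise statement
\[
n_i\,[\bm v\times\bm u]_i=[\bm v\times\bm n]_i\,[\bm u]_i,\qquad i=1,2,3,
\]
on $\p\om\times[0,T]$.

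The crux — the one observation the whole proof turns on — is that $\bm u=\nabla\psi_c\times\bm v$ is parallel to $\bm n$. Indeed, in $\IR^3$ the set $\bm n^\perp$ of vectors orthogonal to the unit vector $\bm n$ is a two-dimensional plane; since both $\nabla\psi_c$ and $\bm v$ lie in $\bm n^\perp$, their cross product is orthogonal to that plane, and the orthogonal complement of $\bm n^\perp$ is $\mathrm{span}(\bm n)$. Hence $\bm u=\lambda\bm n$ with $\lambda:=(\nabla\psi_c\times\bm v)\cdot\bm n$, which also covers the degenerate case where $\bm v$ and $\nabla\psi_c$ are parallel, giving $\bm u=0$ and $\lambda=0$. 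Substituting $[\bm u]_i=\lambda n_i$ and $\bm v\times\bm u=\lambda\,(\bm v\times\bm n)$ into the componentwise identity makes both sides equal to $\lambda\,n_i\,[\bm v\times\bm n]_i$, which proves all three relations simultaneously.

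The expected obstacle is not the final algebra but spotting the cross-product structure: once one notices that the awkward bracketed combinations are the components of $\bm v\times(\nabla\psi_c\times\bm v)$ and that $\nabla\psi_c\times\bm v\parallel\bm n$ on the boundary, the identities are immediate. A reader who prefers to avoid vector identities can instead verify each relation by brute force, using \eqref{bd3d:v} to eliminate one of $v_1\cos\theta\sin\varphi$, $v_2\sin\theta\sin\varphi$, $v_3\cos\varphi$ and the $\psi_c$-analogue to eliminate the corresponding derivative; the main difficulty then becomes the bookkeeping of the many cancelling monomials, exactly the term-by-term computation carried out in the proof of Lemma \ref{Lem3d1}.
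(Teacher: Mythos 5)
Your proof is correct, and it takes a genuinely different and slicker route than the paper's. The paper proves only the first identity, by brute force: it uses Lemma \ref{Lem3d:D} to get $\nabla\psi_c\cdot\bm n=0$, substitutes $\cos\theta\sin\varphi\,\p_x\psi_c=-\sin\theta\sin\varphi\,\p_y\psi_c-\cos\varphi\,\p_z\psi_c$ into the left-hand side, then invokes \eqref{bd3d:v} to trade $(v_2^2+v_3^2)\sin\theta\sin\varphi+v_1v_2\cos\theta\sin\varphi$ for $v_3^2\sin\theta\sin\varphi-v_2v_3\cos\varphi$ (and similarly for the $\p_z\psi_c$ coefficient), and factors the result; the other two identities are declared similar and omitted. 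You instead observe that the three identities are the componentwise statement $n_i[\bm v\times\bm u]_i=[\bm v\times\bm n]_i[\bm u]_i$ with $\bm u:=\nabla\psi_c\times\bm v$ (for $i=2$ both displayed factors are the negatives of the corresponding components, but the product is unaffected, so your identification is still exact), and that since $\bm v$ and $\nabla\psi_c$ both lie in $\bm n^\perp$ on the boundary, $\bm u=\lambda\bm n$, whence both sides equal $\lambda n_i[\bm v\times\bm n]_i$. I verified the triple-product expansion $\bm v\times(\nabla\psi_c\times\bm v)=|\bm v|^2\nabla\psi_c-(\bm v\cdot\nabla\psi_c)\bm v$ does reproduce the bracketed left-hand factors, and your handling of the degenerate case $\bm u=0$ is fine. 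What your approach buys is a uniform proof of all three relations at once and a geometric explanation of \emph{why} they hold (they encode $\nabla\psi_c\times\bm v\parallel\bm n$); what it costs is reliance on three-dimensional cross-product identities, so unlike the paper's computation it does not suggest how the analogous cancellations arise in the two-dimensional lemmas of \S\ref{Sect:Lem}, which are proved by the same term-by-term style. Either proof is acceptable; yours is arguably preferable for this lemma.
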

\begin{proof}
We prove the first relation in \eqref{LemPre3d:eq1} for demonstration, and the proofs of the remaining two equalities  are omitted due to similarity.

We utilize the following relation
\begin{equation}\label{bd3d:psic}\begin{array}{l}
\ds \p_x \psi_c \cos\theta\sin\varphi +   \p_y \psi_c \sin\theta\sin\varphi + \p_z \psi_c \cos \varphi=0 ~~\mbox{on}  ~~ \p \om \times [0,T]
\end{array}
\end{equation}
which is obtained from $(\bm D \nabla \psi_c) \cdot \bm n(\bm x) =0 $ on  $\p \om \times [0,T]$ by Lemma \ref{Lem3d:D},  to reformulate the left-hand side term in the first equality in \eqref{LemPre3d:eq1} as follows
\begin{equation}\begin{array}{l}\label{LemPre3d:eq2}
\hspace{-0.15in} \cos\theta\sin\varphi \big( (v_2^2 + v_3^2) \p_x \psi_c - v_1 v_2 \p_y \psi_c - v_1 v_3 \p_z \psi_c\big) \\[0.1in]
  = ( -\sin\theta\sin\varphi \p_y \psi_c) (v_2^2 + v_3^2) +  ( -\cos\varphi \p_z \psi_c) (v_2^2 + v_3^2)\\[0.1in]
   \ds \quad + \cos\theta\sin\varphi \big( - v_1 v_2 \p_y \psi_c - v_1 v_3 \p_z \psi_c\big)\\[0.1in]
  \ds =  -\p_y \psi_c \big[(v_2^2 + v_3^2) \sin\theta\sin\varphi + v_1 v_2  \cos\theta\sin\varphi \big]\\[0.1in]
   \ds \quad -\p_z \psi_c \big[(v_2^2 + v_3^2) \cos\varphi + v_1 v_3  \cos\theta\sin\varphi \big]\\[0.1in]
   \ds =  -\p_y \psi_c \big[ v_3^2 \sin\theta\sin\varphi - v_2 v_3  \cos\varphi \big]   -\p_z \psi_c \big[ v_2^2 \cos\varphi - v_2 v_3  \sin\theta\sin\varphi \big]\\[0.1in]
   \ds = (v_2 \cos \varphi -v_3 \sin\theta\sin\varphi)(v_3 \p_y \psi_c-v_2 \p_z \psi_c),
  \end{array}
\end{equation}
where we utilize the boundary condition \eqref{bd3d:v} to obtain the third equality, and  we thus prove the first equality in \eqref{LemPre3d:eq1}.
\end{proof}

\begin{lemma}\label{Lem3d3}  Under the boundary conditions \eqref{Lem3:bd} in Lemma \ref{Lem3}, we have $\ds \big[ (\hat{\bm E} (\bm x, \bm v) \otimes \nabla c ) \nabla \psi_c\big] \cdot \bm n(\bm x) =0$ on $ \p \om \times [0,T]$ with $\hat {\bm E}(\bm x, \bm v)$ defined  in \eqref{hatE3d}.
\end{lemma}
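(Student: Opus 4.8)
The plan is to mirror the two-part strategy used for the two-dimensional Lemma \ref{Lem3}. First, by Lemma \ref{Lem3d:D} the hypotheses $(\bm D\nabla c)\cdot\bm n(\bm x)=0$ and $(\bm D\nabla\psi_c)\cdot\bm n(\bm x)=0$ in \eqref{Lem3:bd} are equivalent to $\nabla c\cdot\bm n(\bm x)=0$ and $\nabla\psi_c\cdot\bm n(\bm x)=0$ on $\p\om\times[0,T]$, the latter being the angular relation \eqref{bd3d:psic} together with its analogue for $c$. I would then split $\hat{\bm E}=\hat{\bm E}^1+\hat{\bm E}^2$ along the decomposition \eqref{DhatE3d}--\eqref{DhatE3d1}, where $\hat{\bm E}^1$ collects the isotropic contribution $\bm E_d=\frac{d_t}{|\bm v|}(v_1,v_2,v_3)$ that sits on the diagonal blocks $\p_{\bm v}D_{i,i}$ and $\hat{\bm E}^2$ collects the longitudinal parts $\bm E_{l,i}$ together with the off-diagonal blocks $\p_{\bm v}D_{i,j}$, exactly as $\hat{\bm E^t}+\hat{\bm E^l}$ in \eqref{Lem3:Matrix2}. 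The isotropic part is immediate: the block $\hat{\bm E}^1_{i,j}$ equals $\frac{d_t v_i}{|\bm v|}$ times the $j$-th unit row vector, so by the three-dimensional form of \eqref{Def:kron} the matrix $\hat{\bm E}^1\otimes\nabla c$ has $(i,j)$ entry $\frac{d_t v_i}{|\bm v|}\p_{x_j}c$; applying it to $\nabla\psi_c$ and dotting with $\bm n$ yields $\frac{d_t}{|\bm v|}(\nabla c\cdot\nabla\psi_c)(\bm v\cdot\bm n(\bm x))=0$ by \eqref{bd3d:v}, in direct analogy with \eqref{Lem3:Et}.

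The substantive work is the longitudinal part. Following \eqref{Lem3:El}, I would expand $\big[(\hat{\bm E}^2\otimes\nabla c)\nabla\psi_c\big]\cdot\bm n(\bm x)$, pull out the common factor $(d_l-d_t)/|\bm v|^3$, and regroup the result as $\hat H_1\p_x\psi_c+\hat H_2\p_y\psi_c+\hat H_3\p_z\psi_c$, where each $\hat H_k$ is a cubic-in-$\bm v$ polynomial paired with the components of $\nabla c$ and the normal components $\cos\theta\sin\varphi$, $\sin\theta\sin\varphi$, $\cos\varphi$. The key structural observation is that the longitudinal blocks reproduce the tangential-projection combinations $|\bm v|^2\p_{x_k}w-v_k(\bm v\cdot\nabla w)$, which are precisely the bracketed expressions on the left-hand sides of the three identities in Lemma \ref{LemLPre3d}. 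Since $c$ satisfies the same condition $(\bm D\nabla c)\cdot\bm n(\bm x)=0$ as $\psi_c$, the relations \eqref{LemPre3d:eq1} are available for both $c$ and $\psi_c$; substituting their factored right-hand sides and then invoking $\nabla c\cdot\bm n(\bm x)=0$ and $\bm v\cdot\bm n(\bm x)=0$ should collapse each $\hat H_k$ to zero, exactly as the relations \eqref{bdQ} annihilated the factors in \eqref{Lem3:Ele2}. Combining this with the isotropic part then gives the claim.

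The main obstacle is precisely this last bookkeeping step. The three-dimensional longitudinal expansion carries far more monomials than its planar counterpart, and there are three distinct $\psi_c$-coefficients rather than the two of \eqref{Lem:hatQ1}--\eqref{Lem:hatQ2}. The delicate part is organizing the expanded sum so that each grouping matches one of the three prepared factored identities of Lemma \ref{LemLPre3d}; these identities were tailored for exactly this purpose, so once the correct pairing of the $\nabla c$, $\bm v$, and normal-component factors is identified, the cancellation is forced and no conceptual difficulty remains beyond managing the algebra carefully.
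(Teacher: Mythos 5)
Your overall strategy is the paper's: the same split $\hat{\bm E}=\hat{\bm E}^1+\hat{\bm E}^2$ into the isotropic part built from $\bm E_d$ and the longitudinal/off-diagonal remainder, the same one-line disposal of the isotropic part via \eqref{bd3d:v}, and the same reliance on the factored identities of Lemma \ref{LemLPre3d} together with $\nabla c\cdot\bm n=\nabla\psi_c\cdot\bm n=\bm v\cdot\bm n=0$. The gap is in your final cancellation mechanism. You claim that, after writing the longitudinal contribution as $\frac{d_l-d_t}{|\bm v|^3}\big(\hat H_1\p_x\psi_c+\hat H_2\p_y\psi_c+\hat H_3\p_z\psi_c\big)$, each coefficient $\hat H_k$ collapses to zero. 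This is false. At a boundary point with $\bm n=(1,0,0)$ one has $v_1=\p_x c=0$, and the coefficient of $\p_x\psi_c$ reduces to $(v_2^2+v_3^2)(v_2\p_y c+v_3\p_z c)=|\bm v|^2(\nabla c\cdot\bm v)$, a tangential derivative of $c$ with no reason to vanish. The same phenomenon is already present in 2D: the quantities in \eqref{Lem:hatQ1}--\eqref{Lem:hatQ2} do not vanish individually; it is the combination $\hat Q_1\p_x\psi_c+\hat Q_2\p_y\psi_c$ in \eqref{Lem3:Ele2} that is regrouped across both terms and then killed by \eqref{bdQ}.

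What actually happens in the paper (\eqref{Lem3d3:Ele2}--\eqref{Lem3d3:hatQ2}) is a cross-regrouping of all three coefficients by the factors $v_1\p_x c$, $v_2\p_y c$, $v_3\p_z c$, followed by a further splitting of two residual terms using $\nabla c\cdot\bm n=0$; only then does the whole expression factor as $(\nabla c\cdot\bm v)\,\hat J$, where $\hat J$ is the sum of the right-hand sides of Lemma \ref{LemLPre3d}. Since $\nabla c\cdot\bm v$ does not vanish on the boundary, one must still prove $\hat J=0$, which requires both \eqref{bd3d:psic} and \eqref{bd3d:v}. In particular the identities of Lemma \ref{LemLPre3d} are applied only to $\psi_c$ (not, as you suggest, to both $c$ and $\psi_c$), and the cancellation is not ``forced'' coefficient by coefficient: it needs the specific global rearrangement plus the extra identity $\hat J=0$. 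Your skeleton is right, but as written the decisive step would fail.
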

\begin{proof}
We combine \eqref{hatE3d} with the split in \eqref{DhatE3d} to accordingly split $  \hat {\bm E} := \hat {\bm  E^1}+ \hat  {\bm E^2}$  with $\hat{\bm E^1 } = \big(\hat{\bm E}^1_{i,j}\big)_{i,j=1}^3$ and $\hat{\bm E^2 } = \big(\hat{\bm E}^2_{i,j}\big)_{i,j=1}^3$  defined as follows
\begin{equation}\begin{array}{l}\label{Lem3d3:Matrix1}
\hspace{-0.15in}\ds
\hat{\bm E^1}: =\begin{bmatrix}
  \big([\bm E_d]_1,0,0\big) & \big(0, [\bm E_d]_1,0\big),  & \big(0, 0,[\bm E_d]_1\big)  \\[0.1in]
  \big([\bm E_d]_2,0,0\big)  &  \big(0, [\bm E_d]_2,0\big) & \big(0, 0,[\bm E_d]_2\big)\\[0.1in]
  \big([\bm E_d]_3,0,0\big)  &  \big(0, [\bm E_d]_3,0\big) & \big(0, 0,[\bm E_d]_3\big)
\end{bmatrix},
\end{array}
\end{equation}
\begin{equation}\begin{array}{l}\label{Leme3d3:Matrix2}
\hspace{-0.1in} {\footnotesize \hspace{-0.2in}\ds \hat{\bm E^2} \hspace{-0.02in} \ds : = \hspace{-0.05in} \ds
\begin{bmatrix}
  \big([\bm E_{l,1}]_1,\p_{v_1} D_{1,2},\p_{v_1} D_{1,3} \big) &\hspace{-0.15in} \big( \p_{ v_1} D_{2,1}, [\bm E_{l,2}]_1, \p_{v_1} D_{2,3}\big) &\hspace{-0.15in} \big( \p_{ v_1} D_{3,1}, \p_{ v_1} D_{3,2}, [\bm E_{l,3}]_1\big)  \\[0.1in]
  \big([\bm E_{l,1}]_2,\p_{v_2} D_{1,2},\p_{v_2} D_{1,3} \big) &\hspace{-0.15in} \big( \p_{ v_2} D_{2,1}, [\bm E_{l,2}]_2, \p_{v_2} D_{2,3}\big) &\hspace{-0.15in} \big( \p_{ v_2} D_{3,1}, \p_{ v_2} D_{3,2}, [\bm E_{l,3}]_2\big)  \\[0.1in]
    \big([\bm E_{l,1}]_3,\p_{v_3} D_{1,2},\p_{v_3} D_{1,3} \big) &\hspace{-0.15in} \big( \p_{ v_3} D_{2,1}, [\bm E_{l,2}]_3, \p_{v_3} D_{2,3}\big) &\hspace{-0.15in} \big( \p_{ v_3} D_{3,1}, \p_{ v_3} D_{3,2}, [\bm E_{l,3}]_3\big)
    \end{bmatrix}.
}
\end{array}
\end{equation}
Here $[\bm b]_i$ for $i =1, 2, 3$ refers to the $i$-th element of the vector  $\bm b \in \mathbb R^{3}$.

We first show that $\big[(\hat{\bm E^1} \otimes \nabla c) \nabla \psi_c\big] \cdot \bm n(\bm x) =0$ on  $\p \om \times [0,T]$.
 By  \eqref{Lem3d3:Matrix1}, we evaluate
\begin{equation}\begin{array}{cl}\label{Lem3d3:Et}
\hspace{-0.15in} \ds  \big[(\hat{\bm E^1} \otimes \nabla c) \nabla \psi_c\big] \cdot \bm n(\bm x) & \hspace{-0.1in}\ds =\f{1}{|\bm v|}
\begin{bmatrix}
v_1 \p_x c   & v_1 \p_y c  & v_1 \p_z c \\[0.05in]
v_2 \p_x c &v_2 \p_y c &v_2 \p_z c\\[0.05in]
v_3 \p_x c &v_3 \p_y c &v_3 \p_z c
\end{bmatrix}
\nabla \psi_c \cdot \bm n(\bm x)\\[0.3in]
& \hspace{-0.1in} \ds \!= \!\f{\big(\nabla c\cdot \nabla \psi_c \big)}{|\bm v|} \big[v_1 \cos\theta \sin \varphi  +v_2 \sin\theta\sin \varphi   +v_3 \cos \varphi \big] \\[0.1in]
& \hspace{-0.1in} \ds =0,
\end{array}
\end{equation}
in which the last equality is by the boundary condition \eqref{bd3d:v}.
We then evaluate $\big[(\hat {\bm E^2}\otimes \nabla c) \nabla \psi_c\big] \cdot \bm n(\bm x)$  on  $\p \om \times [0,T]$ by following the similar procedures in \eqref{Lem3d:El}--\eqref{Lem3d:El2} to obtain
\begin{equation}\begin{array}{l}\label{Lem3d3:El}
  \hspace{-0.3in} \ds\big[(\hat {\bm E^2}\otimes \nabla c) \nabla \psi_c\big] \cdot \bm n(\bm x)  \\[0.1in]
 \hspace{-0.1in}\ds = \p_x \psi_c \Big[ \cos\theta \sin\varphi \big( \hat {\bm E}^2_{1,1} \nabla c\big) + \sin\theta\sin\varphi  \big(\hat {\bm E}^2_{2,1}   \nabla c\big) + \cos \varphi \big(\hat {\bm E}^2_{3,1} \nabla c\big) \Big] \\[0.1in]
 \hspace{-0.1in}\ds \quad + \p_y \psi_c \Big[  \cos\theta \sin\varphi \big( \hat {\bm E}^2_{1,2} \nabla c\big) + \sin\theta\sin\varphi  \big(\hat {\bm E}^2_{2,2}   \nabla c\big) + \cos \varphi \big(\hat {\bm E}^2_{3,2} \nabla c\big) \Big] \\[0.1in]
 \hspace{-0.1in}\ds \quad + \p_z \psi_c \Big[\cos\theta \sin\varphi \big( \hat {\bm E}^2_{1,3} \nabla c\big) + \sin\theta\sin\varphi  \big(\hat {\bm E}^2_{2,3}   \nabla c\big) + \cos \varphi \big(\hat {\bm E}^2_{3,3} \nabla c\big)\Big] \\[0.1in]
 \hspace{-0.1in}\ds = :  \f{d_l-d_t}{|\bm v|^3} \big(G_1 \p_x \psi_c  + G_2 \p_y \psi_c +  G_3 \p_z \psi_c\big),
\end{array}
\end{equation}
in which $G_1$ could be evaluated by incorporating \eqref{DhatE3d}--\eqref{DhatE3d1} and \eqref{Leme3d3:Matrix2} as follows
\begin{equation}\begin{array}{cl}\label{Lem3d:G1}
\hspace{-0.15in}\ds G_1
& \hspace{-0.125in} \ds =  \cos\theta \sin \varphi  \big[ v_1 (v_1^2 + 2v_2^2 + 2v_3^2 ) \p_x c + v_2 (v_2^2 + v_3^2) \p_y c +  v_3 (v_2^2 + v_3^2) \p_z c \big] \\[0.1in]
\hspace{-0.15in}& \hspace{-0.125in} \ds \quad + \sin\theta\sin \varphi  \big[ -v_1^2 v_2 \p_x c + v_1(v_1^2 + v_3^2) \p_y c - v_1 v_2 v_3 \p_z c \big]\\[0.1in]
\hspace{-0.15in}& \hspace{-0.125in} \ds \quad + \cos \varphi  \big[ -v_1^2 v_3 \p_x c  - v_1 v_2 v_3 \p_y c + v_1 (v_1 ^2 + v_2^2 )\p_z c \big]\\[0.01in]
& \hspace{-0.125in} \ds  \overset{\mbox{(\rom{1})}} =  \cos\theta \sin \varphi  \big[  v_2 (v_2^2 + v_3^2) \p_y c +  v_3 (v_2^2 + v_3^2) \p_z c \big] \\[0.1in]
\hspace{-0.15in}& \hspace{-0.125in} \ds \quad + \sin\theta\sin \varphi  \big[ -v_1^2 v_2 \p_x c - v_1(2v_2^2 + v_3^2) \p_y c - v_1 v_2 v_3 \p_z c \big]\\[0.1in]
\hspace{-0.15in}& \hspace{-0.125in} \ds \quad + \cos \varphi  \big[ -v_1^2 v_3 \p_x c  - v_1 v_2 v_3 \p_y c - v_1 (v_2 ^2 + 2v_3^2 )\p_z c \big],
\end{array}
\end{equation}
where (\rom{1}) utilizes
\begin{equation}\label{bd3d:c}\begin{array}{l}
\ds \p_x c \cos\theta\sin\varphi +   \p_y c \sin\theta\sin\varphi + \p_z c \cos \varphi=0 ~~\mbox{on}  ~~ \p \om \times [0,T]
\end{array}
\end{equation}
derived from the boundary condition $(\bm D \nabla c) \cdot \bm n(\bm x) =0 $  by Lemma \ref{Lem3d:D} to cancel the term $\cos\theta \sin \varphi \p_x c$  on the right hand side of the first equality.
We evaluate  $G_2$ in \eqref{Lem3d3:El} by  \eqref{DhatE3d}--\eqref{DhatE3d1} and \eqref{Leme3d3:Matrix2}  in   a similar manner to obtain
\begin{equation}\begin{array}{cl}\label{Lem3d:G2}
\hspace{-0.15in}\ds G_2& \hspace{-0.125in} \ds =  \cos\theta \sin \varphi  \big[ v_2 (v_2^2 + v_3^2 ) \p_x c - v_1 v_2^2  \p_y c -v_1 v_2 v_3 \p_z c \big] \\[0.1in]
\hspace{-0.15in}& \hspace{-0.125in} \ds \quad + \sin\theta\sin \varphi  \big[ v_1(v_1^2 + v_3^2)  \p_x c + v_2(2v_1^2 + v_2^2 + 2v_3^2) \p_y c + v_3(v_1^2 + v_3^2) \p_z c \big]\\[0.1in]
\hspace{-0.15in}& \hspace{-0.125in} \ds \quad + \cos \varphi  \big[ -v_1 v_2 v_3 \p_x c  -  v_2^2 v_3 \p_y c + v_2 (v_1 ^2 + v_2^2 )\p_z c \big]\\[0.01in]
& \hspace{-0.175in} \ds  \overset{\mbox{(\rom{2})} } =  \cos\theta \sin \varphi  \big[ - v_1 v_2^2  \p_y c -v_1 v_2 v_3 \p_z c \big] \\[0.1in]
\hspace{-0.15in}& \hspace{-0.125in} \ds \quad + \sin\theta\sin \varphi  \big[ v_1(v_1^2 + v_3^2)  \p_x c + v_2(2v_1^2 + v_3^2) \p_y c + v_3(v_1^2 + v_3^2) \p_z c \big]\\[0.1in]
\hspace{-0.15in}& \hspace{-0.125in} \ds \quad + \cos \varphi  \big[ -v_1 v_2 v_3 \p_x c  -  v_2^2 v_3 \p_y c + v_2 (v_1 ^2 - v_3^2 )\p_z c \big],
\end{array}
\end{equation}
where (\rom{2}) similarly utilizes \eqref{bd3d:c} to cancel the term $\cos\theta \sin \varphi \p_x c$  on the right hand side of the first equality.
We finally evaluate   $G_3$ in \eqref{Lem3d3:El}  and incorporate \eqref{bd3d:c} in (\rom{3}) to arrive at
\begin{equation}\begin{array}{cl}\label{Lem3d:G3}
\hspace{-0.15in}\ds G_3
& \hspace{-0.125in} \ds =  \cos\theta \sin \varphi  \big[ v_3 (v_2^2 + v_3^2 ) \p_x c - v_1 v_2 v_3  \p_y c -v_1  v_3^2 \p_z c \big] \\[0.1in]
\hspace{-0.15in}& \hspace{-0.125in} \ds \quad + \sin\theta\sin \varphi  \big[ -v_1v_2 v_3  \p_x c + v_3(v_1^2  + v_3^2) \p_y c - v_2 v_3^2 \p_z c \big]\\[0.1in]
\hspace{-0.15in}& \hspace{-0.125in} \ds \quad + \cos \varphi  \big[ v_1 (v_1^2 + v_2^2 )  \p_x c  + v_2 (v_1^2 + v_2^2 ) \p_y c + v_3 (2v_1 ^2 + 2v_2^2 + v_3^2 )\p_z c \big]\\[0.01in]
& \hspace{-0.2in} \ds  \overset{\mbox{(\rom{3})} } =  \cos\theta \sin \varphi  \big[ - v_1 v_2 v_3  \p_y c -v_1  v_3^2 \p_z c \big] \\[0.1in]
\hspace{-0.15in}& \hspace{-0.125in} \ds \quad + \sin\theta\sin \varphi  \big[ -v_1v_2 v_3  \p_x c + v_3(v_1^2  - v_2^2) \p_y c - v_2 v_3^2 \p_z c \big]\\[0.1in]
\hspace{-0.15in}& \hspace{-0.125in} \ds \quad + \cos \varphi  \big[ v_1 (v_1^2 + v_2^2 )  \p_x c  + v_2 (v_1^2 + v_2^2 ) \p_y c + v_3 (2v_1 ^2 + v_2^2 )\p_z c \big].
\end{array}
\end{equation}
We then combine  \eqref{Lem3d:G1}--\eqref{Lem3d:G3} to rearrange  $\big[(\hat{\bm E^2}\otimes \nabla c) \nabla \psi_c\big]$ in \eqref{Lem3d3:El}  by collecting the terms with $ \p_x c v_1$, $ \p_y c v_2$, or $ \p_z c v_3$  to obtain
\begin{equation}\begin{array}{l}\label{Lem3d3:Ele2}
\hspace{-0.25in}  \ds \f{|\bm v|^3}{d_l-d_t} \big[(\hat{\bm E^2}\otimes \nabla c) \nabla \psi_c\big] \cdot \bm n(\bm x)  \\[0.15in]
\hspace{-0.15in} \ds =  \big(\p_y c v_2 + \p_z c v_3 \big) \cos\theta\sin\varphi\big[ (v_2^2 + v_3^2) \p_x \psi_c - v_1 v_2 \p_y \psi_c - v_1 v_3 \p_z \psi_c\big] \\[0.15in]
\ds   + \big(\p_x c v_1 + \p_z c v_3 \big) \sin\theta\sin\varphi\big[- v_1 v_2 \p_x\psi_c  + (v_1^2 + v_3^2) \p_y \psi_c - v_2 v_3 \p_z \psi_c\big] \\[0.15in]
\ds  + \big(\p_x c v_1 + \p_y c v_2 \big) \cos\varphi\big[- v_1 v_3 \p_x  \psi_c - v_2 v_3 \p_y  \psi_c  + (v_1^2 + v_2^2) \p_z \psi_c\big] \\[0.15in]
\ds + \sin\theta\sin\varphi \p_y c\big[ -v_1 (v_3^2 + 2 v_2^2)\p_x \psi_c \!+\! v_2 (2 v_1^2 + v_3^2)\p_y \psi_c \!+\! v_3 (v_1^2- v_2^2)\p_z \psi_c\big]\\[0.15in]
\ds + \cos\varphi \p_z c\big[ -v_1 (v_2^2 + 2 v_3^2)\p_x \psi_c + v_2 ( v_1^2 - v_3^2)\p_y \psi_c + v_3 (2v_1^2+ v_2^2)\p_z \psi_c\big].
\end{array}
\end{equation}
By splitting the fourth term on the right hand side of \eqref{Lem3d3:Ele2}, we find that
\begin{equation}\begin{array}{l}\label{Lem3d3:Ele3}
\hspace{-0.2in}  \ds  \sin\theta\sin\varphi \p_y c\big[ -v_1 (v_3^2 + 2 v_2^2)\p_x \psi_c + v_2 (2 v_1^2 + v_3^2)\p_y \psi_c + v_3 (v_1^2- v_2^2)\p_z \psi_c\big]\\[0.125in]
\ds = \ds  \sin\theta\sin\varphi (\p_y c v_2 )\big[- v_1 v_2 \p_x\psi_c  + (v_1^2 + v_3^2) \p_y \psi_c - v_2 v_3 \p_z \psi_c\big]\\[0.125in]
\ds \quad + \sin\theta\sin\varphi (\p_y c  v_1)\big[ -(v_2^2 + v_3^2) \p_x \psi_c + v_1 v_2 \p_y \psi_c + v_1 v_3 \p_z \psi_c\big],
\end{array}
\end{equation}
where the last term in \eqref{Lem3d3:Ele3} could be further simplified by \eqref{bd3d:c} as follows
\begin{equation}\begin{array}{l}\label{Lem3d3:Ele5}
\hspace{-0.2in}   \sin\theta\sin\varphi \p_y c  v_1\big[ -(v_2^2 + v_3^2) \p_x \psi_c + v_1 v_2 \p_y \psi_c + v_1 v_3 \p_z \psi_c\big]\\[0.1in]
\ds = \ds \cos\theta\sin\varphi (\p_x cv_1)\big[ (v_2^2 + v_3^2) \p_x \psi_c - v_1 v_2 \p_y \psi_c - v_1 v_3 \p_z \psi_c\big]\\[0.1in]
\ds \quad +  \cos\varphi (\p_z c v_1)\big[ (v_2^2 + v_3^2) \p_x \psi_c - v_1 v_2 \p_y \psi_c - v_1 v_3 \p_z \psi_c\big].
\end{array}
\end{equation}
We similarly split the fifth term on the right hand side of \eqref{Lem3d3:Ele2} as follows
\begin{equation}\begin{array}{l}\label{Lem3d3:Ele4}
\hspace{-0.2in} \ds \cos\varphi \p_z c\big[ -v_1 (v_2^2 + 2 v_3^2)\p_x \psi_c + v_2 ( v_1^2 - v_3^2)\p_y \psi_c + v_3 (2v_1^2+ v_2^2)\p_z \psi_c\big]\\[0.125in]
\ds =\cos\varphi (\p_z c v_3)\big[- v_1 v_3 \p_x  \psi_c - v_2 v_3 \p_y  \psi_c  + (v_1^2 + v_2^2) \p_z \psi_c\big] \\[0.125in]
 \ds \quad - \cos\varphi (\p_z c v_1) \big[ (v_2^2 + v_3^2) \p_x \psi_c - v_1 v_2 \p_y \psi_c - v_1 v_3 \p_z \psi_c\big].
\end{array}
\end{equation}

We invoke \eqref{Lem3d3:Ele3}--\eqref{Lem3d3:Ele4} into \eqref{Lem3d3:Ele2} and  cancel the like terms to reformulate \eqref{Lem3d3:Ele2} as follows
\begin{equation}\begin{array}{l}\label{Lem3d3:Ele6}
\hspace{-0.2in} \ds \f{|\bm v|^3}{d_l-d_t} \big[(\hat{\bm E^2}\otimes \nabla c) \nabla \psi_c\big] \cdot \bm n(\bm x)  \\[0.15in]
  \hspace{-0.1in}\ds  \ds =  \big(\nabla  c \cdot \bm v \big) \Big[ \cos\theta\sin\varphi\big( (v_2^2 + v_3^2) \p_x \psi_c - v_1 v_2 \p_y \psi_c - v_1 v_3 \p_z \psi_c\big) \\[0.1in]
  \hspace{-0.1in}\ds  \quad\ds  +  \sin\theta\sin\varphi\big(- v_1 v_2 \p_x\psi_c  + (v_1^2 + v_3^2) \p_y \psi_c - v_2 v_3 \p_z \psi_c\big) \\[0.1in]
  \hspace{-0.1in}\ds \quad \ds  + \cos\varphi\big(- v_1 v_3 \p_x  \psi_c - v_2 v_3 \p_y  \psi_c  + (v_1^2 + v_2^2) \p_z \psi_c\big) \Big] \\[0.1in]
  \hspace{-0.1in}\ds \ds  = \big(\nabla  c \cdot \bm v \big) \Big[ (v_2 \cos \varphi -v_3 \sin\theta\sin\varphi)(v_3 \p_y \psi_c-v_2 \p_z \psi_c)\\[0.1in]
  \hspace{-0.1in}\ds \quad \ds   +  (v_1 \cos\varphi - v_3 \cos\theta\sin\varphi)(v_3 \p_x \psi_c - v_1 \p_z\psi_c)\\[0.1in]
  \hspace{-0.1in}\ds  \quad \ds   + (v_1 \sin\theta \sin\varphi-v_2 \cos\theta\sin\varphi)(v_2 \p_x \psi_c - v_1 \p_y \psi_c) \Big]= : \big(\nabla  c \cdot \bm v \big) \hat J,
\end{array}
\end{equation}
where the second equality is by Lemma \ref{LemLPre3d}. We evaluate $\hat J := \hat J_1 + \hat J_2$ in \eqref{Lem3d3:Ele6} to obtain
 \begin{equation}\begin{array}{l}\label{Lem3d3:hatQ1}
\ds \hat J_1 \ds := -v_1^2 \big[ \p_y \psi_c \sin\theta\sin\varphi + \p_z \psi_c \cos \varphi\big] \ds -v_2^2 \big[\p_x \psi_c \cos\theta\sin\varphi + \p_z \psi_c \cos \varphi \big]    \\[0.1in]
\ds \qquad \quad  \ds  -v_3^2 \big[ \p_x \psi_c \cos\theta\sin\varphi + \p_y \psi_c \sin\theta\sin\varphi)\big]\\[0.1in]
\ds \qquad  = v_1^2\p_x \psi_c \cos\theta\sin\varphi + v_2^2 \p_y \psi_c  \sin\theta\sin\varphi + v_3^2 \p_z \psi_c \cos \varphi
  \end{array}
\end{equation}
by \eqref{bd3d:psic} and
 \begin{equation}\begin{array}{l}\label{Lem3d3:hatQ2}
\ds \hat J_2 \ds := v_1 \p_x \psi_c  \big[ v_2 \sin\theta\sin\varphi + v_3 \cos \varphi\big]  + v_2 \p_y \psi_c \big[v_1 \cos\theta\sin\varphi + v_3 \cos \varphi \big]    \\[0.1in]
\ds \qquad \quad  +  v_3 \p_z \psi_c \big[ v_1 \cos\theta\sin\varphi + v_2 \sin\theta\sin\varphi)\big] \\[0.1in]
\ds \qquad \ds = -v_1^2\p_x \psi_c \cos\theta\sin\varphi - v_2^2 \p_y \psi_c  \sin\theta\sin\varphi - v_3^2 \p_z \psi_c \cos \varphi
  \end{array}
\end{equation}
by \eqref{bd3d:v}. By \eqref{Lem3d3:hatQ1}--\eqref{Lem3d3:hatQ2}, we thus have $\hat J=0$ in \eqref{Lem3d3:Ele6}, which, combined with \eqref{Lem3d3:Et}, completes the proof.
\end{proof}

\begin{remark}\label{Opt:3d}
  With the assistance of the complex boundary conditions in \S \ref{Sect:Lem3d}, we could follow the proof of Theorem \ref{thm:OptCond} to  accordingly  derive the first-order optimality condition for the three-dimensional optimal control problem \eqref{Model}--\eqref{ObjFun}.
\end{remark}
\section*{Acknowledgments}
This work was partially supported by the National Natural Science Foundation of China (No. 12301555), the National Key R\&D Program of China (No. 2023YFA1008903),  the Taishan Scholars Program of Shandong Province (No. tsqn202306083),  the National Science Foundation under Grants DMS-2012291 and DMS-2245097.

	\end{document}